\newtheorem{theorem}{Theorem}[section]
\newtheorem{proposition}[theorem]{Proposition}
\newtheorem{corollary}[theorem]{Corollary}
\newtheorem{remark}{Remark}
\theoremstyle{definition}
\newtheorem*{definition}{Definition} 
\newtheorem{example}[theorem]{Example}
\declaretheoremstyle[
spaceabove=6pt, spacebelow=6pt,
headfont=\normalfont\bfseries,
notefont=\normalfont\bfseries, 
notebraces={}{},
bodyfont=\normalfont\itshape
]{Estilo1}
\newcommand\Prod[1] 		{\langle #1 \rangle}
\def\R{\mathds{R}}
\def\C{\mathds{C}}
\def\I{\mathcal{I}}
\def\P{\mathds{P}}
\def\intprod		{\lrcorner\,}
\def\im	 				{\mathrm{i}}
\DeclareMathOperator{\Span}{span}
\DeclareMathOperator{\area}{area}
\begin{document}

\title{Projection factors and generalized real and complex Pythagorean theorems}

\author{Andr\'e L. G. Mandolesi 
               \thanks{Instituto de Matemática e Estatística, Universidade Federal da Bahia, Av. Adhemar de Barros s/n, 40170-110, Salvador - BA, Brazil. E-mail: \texttt{andre.mandolesi@ufba.br}}}

\date{\today.}

\maketitle

\abstract{Projection factors describe the contraction of Lebesgue measures in orthogonal projections between subspaces of a real or complex inner product space. They are connected to Grassmann's exterior algebra and the Grassmann angle between subspaces, and lead to generalized Pythagorean theorems, relating measures of subsets of real or complex subspaces and their orthogonal projections on certain families of subspaces. 

The complex Pythagorean theorems differ from the real ones in that the measures are not squared, and this may have important implications for quantum theory. Projection factors of the complex line of a quantum state with the eigenspaces of an observable give the corresponding quantum probabilities. The complex Pythagorean theorem for lines corresponds to the condition of unit total probability, and may provide a way to solve the probability problem of Everettian quantum mechanics.

\vspace{.5em}
\noindent
{\bf Keywords:} projection factor, Pythagorean theorem, Grassmann algebra, Grassmann angle, quantum mechanics

\noindent
{\bf MSC:} 51M05, 28A75, 15A75, 81P16
}

\section{Introduction}

The importance of the Pythagorean theorem is reflected in its many generalizations. 
One which is well known, playing an essential role in quantum mechanics, is that if $X=\bigoplus_{i\in \I}V_i$ is a partition of a real or complex Hilbert space $X$ into a family $\{V_i\}_{i\in \I}$ of mutually orthogonal closed subspaces, and $P_i$ is the orthogonal projection on $V_i$, then for any $v\in X$,
\begin{equation}\label{eq:norm projections}
\|v\|^2 = \sum\limits_{i\in\I} \left\|P_i v \right\|^2.
\end{equation}

Less known generalizations, which keep being rediscovered time and again, relate areas, volumes or Lebesgue measures in general to their orthogonal projections \cite{Amir-Moez1996,Atzema2000,Conant1974,Donchian1935,Drucker2015}. They are usually proven for parallelotopes or simplices, using determinant identities such as the Cauchy-Binet formula \cite{Gantmacher2000}, being sometimes extended to other figures via integral calculus or measure theory. There is even a neat proof via divergence theorem \cite{Eifler2008}.

Using projection factors, which describe the contraction of Lebesgue measures under orthogonal projections, we reobtain these generalizations, with much simpler proofs, and extend them to complex vector spaces. 
This is done by relating these factors to Grassmann's exterior algebra \cite{Winitzki2010,Yokonuma1992}. Many useful properties are also obtained by connecting them to the Grassmann angle between subspaces \cite{Mandolesi_Grassmann,Mandolesi_Products}.

An unusual feature of the complex Pythagorean theorems is that measures are not squared (which is compensated by their dimensions being doubled), and this may allow projection factors to play a fundamental role in quantum mechanics. 
In the Hilbert space of a quantum system, the projection factors of the complex line of a quantum state with the eigenspaces of an observable equal the corresponding quantum probabilities. 
And the total probability being 1 corresponds to the complex Pythagorean theorem for lines. In \cite{Mandolesi_Born} we show this can be used to obtain the Born rule in Everettian quantum mechanics, i.e. the many-worlds interpretation \cite{EverettIII1957,Saunders2010a}.

In \autoref{sc:projection factors} we study projection factors, and relate them to the Grassmann algebra.
The generalized Pythagorean theorems are obtained in \autoref{sc:Pythagorean thms}, and \autoref{sc:epilogue} closes with a few remarks.
In \autoref{sc:Properties of projection factors} we list other useful properties of projection factors.

\section{Projection factors}\label{sc:projection factors}

Let $X$ be a $n$-dimensional vector space over $\R$ (real case) or $\C$ (complex case), with inner product  $\Prod{\cdot,\cdot}$ (Hermitian product in the complex case).
Unless otherwise indicated, whenever we refer to subspaces or other linear algebra concepts of $X$, it will be with respect to the same field as $X$. 

Complex vector spaces can also be seen as real ones, with twice the complex dimension, and the real part of a Hermitian product $\Prod{\cdot,\cdot}$ gives a real inner product $\operatorname{Re}\Prod{\cdot,\cdot}$ in the underlying real vector space. As $\C$-orthogonality (with respect to $\Prod{\cdot,\cdot}$) implies $\R$-orthogonality (with respect to $\operatorname{Re}\Prod{\cdot,\cdot}$), orthogonal projections with respect to both products coincide.

On any real or complex subspace $V\subset X$, let $|\cdot|_m$ be the $m$-dimensional Lebesgue measure if $m=\dim_\R V$, and $|\!\cdot\!|_m=0$ if  $m>\dim_\R V$. 

\begin{definition}
For subspaces $V,W\subset X$, let $P:V\rightarrow W$ be the orthogonal projection and $p=\dim_\R V$. The \emph{projection factor} of $V$ on $W$ is
\begin{equation*}\label{eq:projection factor}
\pi_{V,W}=\frac{|P(S)|_p}{|S|_p}, 
\end{equation*}
where $S$ is any Lebesgue measurable subset of $V$ with $|S|_p\neq 0$.
\end{definition}

\begin{remark}
$|P(S)|_p$ is defined, since $\dim_\R P(V)\leq p$. 
\end{remark}

\begin{remark}\label{rm:pi independent of S}
As $P$ is linear, $\pi_{V,W}$ does not depend on $S$. More precisely, $\pi_{V,W}=0$ if $\dim_\R P(V)<p$, and if $\dim_\R P(V)=p$ then $V$ and $P(V)$ can be isometrically identified with $\R^p$, in which case independence of $S$ is a defining property of the Lebesgue measure \cite[p.51]{Rudin1986}. 
\end{remark}

Clearly, the projection factor for complex subspaces equals that of their underlying real vector spaces.  Also, $\pi_{V,W}$ depends only on the relative position of $V$ and $W$, being invariant by any orthogonal transformation $T$ (unitary, in the complex case), as $T$ preserves Lebesgue measures and the orthogonal projection of $T(S)$ on $T(W)$ coincides with $T(P(S))$. 

Calculations will be simpler when $S$ is a parallelotope.

\begin{definition}
The \emph{parallelotope} spanned by $v_1,\ldots,v_k\in X$ is
\[ [v_1,\ldots,v_k] = \{t_1v_1+\ldots+t_kv_k : t_1,\ldots,t_k\in [0,1]\}. \]
\end{definition}

The parallelotope is \emph{orthogonal} if the $v_i$'s are mutually $\R$-orthogonal, in which case $\big| [v_1,\ldots,v_k]\big|_k = \|v_1\|\cdot\ldots\cdot\|v_k\|$.
If $P$ is the orthogonal projection onto a subspace $W\subset X$ then 
\[ P([v_1,\ldots,v_k]) = [Pv_1,\ldots,Pv_k]. \]

\subsection{Projection factors of lines}

\begin{definition}
For any $v\in X$, let $\R v=\{cv:c\in\R\}$, and in the complex case also $\C v=\{cv:c\in\C\}$.
\end{definition}

\begin{definition}
A \emph{line} $L\subset X$ is a 1-dimensional subspace. A nonzero $v\in X$ determines a \emph{real line} $\R v$ (in the complex case, it should be understood as a subspace of the underlying real vector space), and in the complex case also a \emph{complex line} $\C v$ (which is isometric to a real plane).
\end{definition}

\begin{proposition}\label{pr:Pv pi}
Given $v\in X$ and a real or complex subspace $W\subset X$, let $P$ be the orthogonal projection on $W$. Then
\begin{equation*}
\|Pv\|=\|v\|\cdot\pi_{\R v,W}.
\end{equation*}
In the complex case, with $W$ being a complex subspace, we also have
\begin{equation}\label{eq:projection complex line}
\|Pv\|^2=\|v\|^2\cdot\pi_{\C v,W}.
\end{equation}
\end{proposition}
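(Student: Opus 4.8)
The plan is to evaluate each projection factor straight from its definition, choosing $S$ to be a parallelotope so that the Lebesgue measures collapse into products of norms via the orthogonal parallelotope formula.

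For the real line, note $\dim_\R \R v = 1$, so I would take $S = [v]$, the segment from $0$ to $v$, with $|S|_1 = \|v\|$. Linearity of $P$ gives $P(S) = [Pv]$, a segment of length $\|Pv\|$, so $|P(S)|_1 = \|Pv\|$. Substituting into the definition yields $\pi_{\R v, W} = \|Pv\| / \|v\|$, which rearranges to the first identity (the degenerate case $Pv = 0$ is consistent, since then $\pi_{\R v, W} = 0$ by \autoref{rm:pi independent of S}).

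For the complex line, $\dim_\R \C v = 2$, so I would instead take $S = [v, \im v]$. Two elementary facts make this an orthogonal parallelotope: $\|\im v\| = \|v\|$, and $\operatorname{Re}\Prod{v, \im v} = 0$ because $\Prod{v,\im v}$ is purely imaginary, so $v$ and $\im v$ are $\R$-orthogonal. Hence $|S|_2 = \|v\|\cdot\|\im v\| = \|v\|^2$.

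The crux is computing $P(S)$, and for this I would use that the orthogonal projection onto a complex subspace $W$ is $\C$-linear, in particular $P(\im v) = \im\,Pv$. This holds because both $W$ and $W^\perp$ are complex subspaces: writing $v = Pv + v^\perp$ with $Pv \in W$ and $v^\perp \in W^\perp$, and multiplying by $\im$, gives $\im v = \im Pv + \im v^\perp$ with $\im Pv \in W$ and $\im v^\perp \in W^\perp$, so $P(\im v) = \im Pv$ by uniqueness of the orthogonal decomposition. Therefore $P(S) = [Pv, \im Pv]$, which is again orthogonal, with $|P(S)|_2 = \|Pv\|\cdot\|\im Pv\| = \|Pv\|^2$. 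The definition then gives $\pi_{\C v, W} = \|Pv\|^2 / \|v\|^2$, which is \eqref{eq:projection complex line}. I expect the $\C$-linearity step to be the only real obstacle; once $P$ commutes with multiplication by $\im$, both $[v, \im v]$ and its image remain orthogonal parallelotopes, and the area formula closes the argument.
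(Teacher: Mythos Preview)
Your proof is correct and follows the same approach as the paper: the paper's proof simply states that the first identity ``follows from the definition'' and that the second holds because orthogonal projections onto complex subspaces are $\C$-linear, so the square $[v,\im v]$ projects to the square $[Pv,\im Pv]$. You have written out exactly these steps in more detail, including an explicit justification of the $\C$-linearity of $P$.
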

\begin{proof}
	The first identity follows from the definition, and the other from the fact that orthogonal projections onto complex subspaces are $\C$-linear, so the square $[v,\im v]\subset\C v$ projects to the square $[Pv,\im Pv]$.
\end{proof}

In \autoref{sc:Properties of projection factors} we give more general formulas (\autoref{pr:properties pi}\,\ref{it:projection blade}).

\begin{example}\label{ex:quantum probability}
Let $X$ be the Hilbert space of a quantum system, $\C\psi$ be the complex line of a  quantum state $\psi$ (its \emph{ray}, except for the origin), and $W$ be the eigenspace of a quantum observable corresponding to some eigenvalue $\lambda$. By \eqref{eq:projection complex line}, $\pi_{\C\psi,W}$ equals the probability of obtaining $\lambda$ when measuring $\psi$ for that observable. In \cite{Mandolesi_Born} we explore this relation between projection factors and quantum probabilities.
\end{example}

\begin{proposition}\label{pr:product projection factor}
For any $v,w\in X$, we have:
\begin{itemize}
\item In the real case, $|\Prod{v,w}|=\|v\|\|w\|\cdot\pi_{\R v,\R w}$. 
\item In the complex case, 
\begin{align}
|\operatorname{Re}\Prod{v,w}|&=\|v\|\|w\|\cdot\pi_{\R v,\R w}, \nonumber \\
|\langle v,w\rangle| &=\|v\|\|w\|\cdot\pi_{\R v,\C w}, \nonumber \\
|\langle v,w\rangle|^2 &=\|v\|^2\|w\|^2\cdot\pi_{\C v,\C w}. \label{eq:Hermitian product projection factor}
\end{align}
\end{itemize}
\end{proposition}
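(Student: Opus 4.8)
The plan is to derive all four identities in Proposition \ref{pr:product projection factor} from the projection-factor results for lines already established in Proposition \ref{pr:Pv pi}, by recognizing that the relevant inner products are exactly the lengths of orthogonal projections of one vector onto the line spanned by the other. The key observation is that for a subspace $W$ and orthogonal projection $P$ onto $W$, one has $\|Pv\| = \|v\|\cdot\pi_{\R v,W}$, so the task reduces to computing $\|Pv\|$ explicitly in each of the cases $W=\R w$, $W=\C w$ and expressing it via the given products.

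First I would treat the real case and the $\operatorname{Re}\Prod{\cdot,\cdot}$ case together, since they are formally identical: if $P$ is orthogonal projection onto $\R w$ (with respect to the real inner product $\operatorname{Re}\Prod{\cdot,\cdot}$ in the complex case), then $Pv = \frac{\operatorname{Re}\Prod{v,w}}{\|w\|^2}\,w$, whence $\|Pv\| = \frac{|\operatorname{Re}\Prod{v,w}|}{\|w\|}$. Combining this with $\|Pv\|=\|v\|\cdot\pi_{\R v,\R w}$ from Proposition \ref{pr:Pv pi} yields $|\operatorname{Re}\Prod{v,w}| = \|v\|\|w\|\cdot\pi_{\R v,\R w}$, which in the real case is just $|\Prod{v,w}|=\|v\|\|w\|\cdot\pi_{\R v,\R w}$.

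Next, for the second complex identity I would take $P$ to be orthogonal projection onto the complex line $\C w$. Here the $\C$-linear projection gives $Pv = \frac{\Prod{v,w}}{\|w\|^2}\,w$, so $\|Pv\| = \frac{|\Prod{v,w}|}{\|w\|}$, and Proposition \ref{pr:Pv pi} (first identity, applied with $W=\C w$) gives $\|Pv\| = \|v\|\cdot\pi_{\R v,\C w}$; equating these produces $|\Prod{v,w}| = \|v\|\|w\|\cdot\pi_{\R v,\C w}$. For the final squared identity \eqref{eq:Hermitian product projection factor}, I would instead invoke the complex-line formula \eqref{eq:projection complex line} of Proposition \ref{pr:Pv pi}, namely $\|Pv\|^2 = \|v\|^2\cdot\pi_{\C v,\C w}$ with $P$ projecting onto $\C w$; substituting $\|Pv\|^2 = \frac{|\Prod{v,w}|^2}{\|w\|^2}$ and rearranging gives $|\Prod{v,w}|^2 = \|v\|^2\|w\|^2\cdot\pi_{\C v,\C w}$.

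I expect no serious obstacle here, since the statement is essentially a specialization of Proposition \ref{pr:Pv pi} to the case where the target subspace is itself a line. The only point requiring a little care is bookkeeping about which inner product governs each projection: the real part $\operatorname{Re}\Prod{\cdot,\cdot}$ controls projections onto real lines $\R w$, while the full Hermitian product controls the $\C$-linear projection onto complex lines $\C w$, and one must make sure the formula for $Pv$ and the corresponding version of Proposition \ref{pr:Pv pi} are matched consistently. Once this matching is in place, each identity follows immediately from the closed-form expression for the orthogonal projection onto a line, with no further computation needed.
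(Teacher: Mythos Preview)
Your proposal is correct and follows exactly the same route as the paper: both arguments reduce the proposition to Proposition~\ref{pr:Pv pi} by writing down the explicit orthogonal projection of $v$ onto the line $\R w$ or $\C w$ and reading off $\|Pv\|$. The paper's proof is just a one-line version of what you spelled out case by case.
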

\begin{proof}
Follows from \autoref{pr:Pv pi}, as the orthogonal projection of $v$ on $W=\R w$ (real case, or underlying real space in the complex one) or $W=\C w$ (complex case) is $Pv=\frac{\langle w,v \rangle}{\|w\|^2} w$.
\end{proof}

Projection factors between lines clearly depend on the angle between them. But some care is needed in the complex case, as there are different concepts of angle between complex vectors  \cite{Galantai2006,Scharnhorst2001}.

\begin{definition}
The \emph{Euclidean angle} $\theta_{v,w}\in[0,\pi]$ of nonzero vectors $v,w\in X$ is the usual angle between them, considered as real vectors, i.e.
\begin{equation*}
\cos\theta_{v,w} = \frac{\operatorname{Re}\langle v,w \rangle}{\|v\| \|w\|}. 
\end{equation*}
In the complex case, the \emph{Hermitian angle} $\gamma_{v,w}\in[0,\frac{\pi}{2}]$ is defined by
\begin{equation*}
\cos \gamma_{v,w} = \dfrac{|\langle v,w\rangle|}{\|v\|\|w\|}.
\end{equation*}
\end{definition}

The Hermitian angle is the (Euclidean) angle between $v$ and the plane $\C w$, and gives the Fubini-Study distance between the lines $\C v$ and $\C w$ in the complex projective space $\P(X)$.

\begin{corollary}\label{cr:projection factors and angles}
For any nonzero vectors $v,w\in X$, 
\[ \pi_{\R v,\R w}=\left|\cos\theta_{v,w}\right|.\]
Also, in the complex case, 
\begin{align*}
\pi_{\R v,\C w} &=\cos\gamma_{v,w}, \\
\pi_{\C v,\C w} &=\cos^2\gamma_{v,w}.
\end{align*}
\end{corollary}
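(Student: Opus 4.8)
The plan is to read off all three identities directly from \autoref{pr:product projection factor} by dividing each equation there by the product of norms and recognizing the resulting quotient as the cosine of the appropriate angle. Since the projection factors have already been expressed in terms of inner products, the work is purely algebraic rearrangement, and I do not expect any genuine obstacle.

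First I would handle $\pi_{\R v, \R w}$. Solving the relevant identity of \autoref{pr:product projection factor} for the projection factor gives $\pi_{\R v, \R w} = |\operatorname{Re}\langle v,w\rangle|/(\|v\|\|w\|)$ in the complex case, and the same with $\operatorname{Re}\langle v,w\rangle = \langle v,w\rangle$ in the real case. By the definition of the Euclidean angle, this quotient is exactly $|\cos\theta_{v,w}|$, so both cases collapse to the single stated formula. The absolute value is essential here, since $\theta_{v,w}\in[0,\pi]$ permits $\cos\theta_{v,w}<0$ whereas a projection factor is never negative.

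For the two Hermitian identities I would proceed identically. Dividing $|\langle v,w\rangle| = \|v\|\|w\|\cdot\pi_{\R v, \C w}$ by $\|v\|\|w\|$ and invoking the definition of $\gamma_{v,w}$ yields $\pi_{\R v, \C w} = \cos\gamma_{v,w}$, while dividing \eqref{eq:Hermitian product projection factor} by $\|v\|^2\|w\|^2$ gives $\pi_{\C v, \C w} = (|\langle v,w\rangle|/(\|v\|\|w\|))^2 = \cos^2\gamma_{v,w}$.

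The only point that warrants a moment's attention -- and hence the closest thing to a ``main obstacle'' -- is the asymmetry in absolute values. Because $\gamma_{v,w}\in[0,\frac{\pi}{2}]$ by definition, one has $\cos\gamma_{v,w}\geq 0$, so no absolute value is needed in the last two formulas, in contrast to the Euclidean case. This is merely a matter of checking that the stipulated range of each angle is compatible with the nonnegativity of projection factors.
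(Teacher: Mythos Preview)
Your argument is correct and is exactly the intended one: the paper states this as a corollary with no written proof, since it follows immediately from \autoref{pr:product projection factor} and the definitions of $\theta_{v,w}$ and $\gamma_{v,w}$, precisely as you describe. Your remark on why no absolute value is needed in the Hermitian formulas is a nice touch but not strictly required.
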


\begin{example}
By \eqref{eq:Hermitian product projection factor}, the fidelity \cite{Bengtsson2017} of pure quantum states $\psi$ and $\varphi$ equals $\pi_{\C\psi,\C\varphi}$, and their Bures angle is the same as  $\gamma_{\psi,\varphi}$.
\end{example}

\subsection{Principal projection factors}

Let $V,W\subset X$ be nonzero subspaces, $p=\dim V$, $q=\dim W$, and $m=\min\{p,q\}$.
A singular value decomposition gives \cite{Galantai2006} orthonormal \emph{principal bases} $(e_1,\ldots,e_p)$ of $V$ and $(f_1,\ldots,f_q)$ of $W$ in which the orthogonal projection $P:V\rightarrow W$ is given by a $q\times p$ diagonal matrix $\mathbf{P}$ with real non-negative diagonal elements $1\geq\sigma_1\geq\ldots\geq\sigma_m\geq 0$, its \emph{singular values}. The $e_i$'s and $f_j$'s are \emph{principal vectors} of $V$ and $W$, and satisfy
\begin{equation}\label{eq:eifj}
\Prod{e_i,f_j}=\begin{cases}
0 \hspace{7pt} \text{ if } i\neq j, \\
\sigma_i \ \text{ if } i=j.
\end{cases}
\end{equation}

This has a nice geometric interpretation. The unit sphere of $V$ projects to an ellipsoid in $W$.  In the real case, for $1\leq i\leq m$, the $\sigma_i$'s are the lengths of its semi-axes, the $e_i$'s project onto them, and the $f_i$'s point along them. In the complex case, for each $1\leq i\leq m$ there are two semi-axes of length $\sigma_i$, corresponding to the projections of $e_i$ and $\im e_i$.

It also admits a variational characterization, given recursively as follows. For each $i=1,\ldots,m$, let
\begin{equation*}
\begin{cases}
V_i=\{v\in V: \Prod{v,e_j}=0\ \forall\, j<i\}, \\
W_i=\{w\in W: \Prod{w,f_j}=0\ \forall\, j<i\}, \\
\sigma_i = \max\left\{\,|\Prod{v,w}| : v\in V_i, w\in W_i, \|v\|=\|w\|=1 \right\}, \\
e_i\in V_i, f_i\in W_i \text{ are unit vectors such that } \Prod{e_i,f_i}=\sigma_i.
\end{cases}
\end{equation*}

\begin{definition}
For $1\leq i\leq m$, the \emph{principal projection factor} $\pi_i$ of $V$ and $W$ is the projection factor for the lines of $e_i$ and $f_i$, i.e.
\[ \pi_i =
\begin{cases}
\pi_{\R e_i,\R f_i} \,\text{ in the real case,} \\
\pi_{\C e_i,\C f_i} \  \text{ in the complex case.}
\end{cases} \]
\end{definition}

From \autoref{pr:product projection factor} and \eqref{eq:eifj} we get, for $1\leq i\leq m$,
\begin{equation}\label{eq:pi sigma}
\pi_i =
\begin{cases}
\sigma_i \ \,\text{ in the real case,} \\
\sigma_i^2 \  \text{ in the complex case.}
\end{cases}
\end{equation}

\begin{theorem}\label{pr:projection factors}
For nonzero subspaces $V,W\subset X$,
\[ \pi_{V,W}=\begin{cases}
\pi_1\cdot\ldots\cdot\pi_p \ \text{ if } \dim V=p\leq\dim W, \\
0 \hspace{15mm} \text{ if } \dim V>\dim W,
\end{cases} \]
where the $\pi_i$'s are their principal projection factors.
\end{theorem}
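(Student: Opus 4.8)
The plan is to reduce the whole computation to a single well-chosen parallelotope and let the singular value decomposition do the work, so that both $|S|$ and $|P(S)|$ become products of norms of mutually $\R$-orthogonal edges, whose ratio telescopes into the $\pi_i$'s.

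First I would dispose of the degenerate case $\dim V>\dim W$. Since $P(V)\subseteq W$, we have $\dim_\R P(V)\leq\dim_\R W<\dim_\R V$: the strict inequality holds in the real case because $\dim_\R W=q<p=\dim_\R V$, and in the complex case because $\dim_\R W=2q<2p=\dim_\R V$. By \autoref{rm:pi independent of S} this forces $\pi_{V,W}=0$, which is the second alternative.

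For the main case $p=\dim V\leq\dim W=q$ (so $m=p$), I would take $S$ to be the orthogonal parallelotope spanned by the principal vectors: $S=[e_1,\ldots,e_p]$ in the real case, and $S=[e_1,\im e_1,\ldots,e_p,\im e_p]$ in the complex case. In either case the listed edges form an $\R$-orthonormal set (in the complex case this uses that $\C$-orthonormality of the $e_i$ makes the $2p$ real vectors $e_i,\im e_i$ $\R$-orthonormal), so $S$ is a unit cube with $|S|=1$. Next I would compute the image $P(S)=[Pe_1,\ldots]$ using the SVD relation $Pe_i=\sigma_i f_i$, which follows from \eqref{eq:eifj}: writing $Pv=\sum_j \langle f_j,v\rangle f_j$ and using $\langle f_j,e_i\rangle=\sigma_i\delta_{ij}$ gives $Pe_i=\sigma_i f_i$ for each $i\leq p$. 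In the complex case the $\C$-linearity of $P$ additionally yields $P(\im e_i)=\sigma_i\,\im f_i$. Because the $f_i$ (and $\im f_i$) are $\R$-orthonormal, the projected edges are again mutually $\R$-orthogonal, so $P(S)$ is an orthogonal parallelotope whose measure is the product of its edge norms: $\sigma_1\cdots\sigma_p$ in the real case and $\prod_{i=1}^p \sigma_i^2$ in the complex case. Dividing by $|S|=1$ and invoking \eqref{eq:pi sigma} identifies this product with $\pi_1\cdots\pi_p$.

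The only point needing care, and the one I would flag explicitly, is the complex case: one must verify that $\{e_i,\im e_i\}$ is genuinely $\R$-orthonormal and that the $\C$-linearity of $P$ doubles the contraction per complex dimension, producing $\sigma_i^2$ rather than $\sigma_i$. This is exactly the mechanism behind \eqref{eq:projection complex line} and \eqref{eq:pi sigma}, so it introduces no new difficulty, but it is where the ``measures are not squared'' feature of the complex theorem enters. Everything else is just the multiplicativity of the $\R$-Lebesgue measure on orthogonal parallelotopes.
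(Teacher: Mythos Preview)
Your proof is correct and follows essentially the same approach as the paper: dispose of the case $\dim V>\dim W$ by a dimension count, then compute $\pi_{V,W}$ on the unit cube spanned by the principal vectors (doubled to $e_i,\im e_i$ in the complex case), using $Pe_i=\sigma_i f_i$ to see that $P(S)$ is again an orthogonal parallelotope with measure $\prod\sigma_i$ or $\prod\sigma_i^2$. The paper's argument is terser but identical in substance; your additional remarks on $\R$-orthonormality and $\C$-linearity simply make explicit what the paper leaves implicit.
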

\begin{proof}
If $\dim V>\dim W$ then $|P(S)|_p=0$ by definition. Otherwise,
\begin{itemize}
\item in the real case, $S=[e_1,\ldots,e_p]$ is a hypercube with $|S|_p=1$ and $P(S)=[\sigma_1f_1,\ldots,\sigma_p f_p]$ is an orthogonal parallelotope with $|P(S)|_p=\sigma_1\cdot\ldots\cdot\sigma_p=\pi_1\cdot\ldots\cdot\pi_p$;
\item in the complex case, $S=[e_1,\im e_1,\ldots,e_p,\im e_p]$ is a hypercube with $|S|_{2p}=1$ and $P(S)=[\sigma_1f_1,\im \sigma_1 f_1,\ldots,\sigma_pf_p,\im \sigma_p f_p]$ is an orthogonal parallelotope with $|P(S)|_{2p}=\sigma^2_1\cdot\ldots\cdot\sigma^2_p=\pi_1\cdot\ldots\cdot\pi_p$. \qedhere
\end{itemize}
\end{proof}

So, in the real case, the factor by which $p$-dimensional volumes in $V$ shrink when projecting onto $W$ is the product of the factors by which lengths in the \emph{principal lines} $\R e_i$ contract.
In the complex case, the factor by which $2p$-dimensional volumes in $V$ shrink is the product of the factors by which areas in the \emph{principal complex lines} $\C e_i$ contract.

We now give practical formulas for computing $\pi_{V,W}$ in terms of orthonormal bases. In \autoref{pr:formula general bases} we extend them to other bases.

\begin{proposition}\label{pr:formula orthonormal bases}
Let $V,W\subset X$ be nonzero subspaces, and $\mathbf{P}$ be a matrix representing the orthogonal projection $P:V\rightarrow W$ in orthonormal bases of $V$ and $W$. Then
\[ 
\pi_{V,W}=
\begin{cases}
\sqrt{\det\left(\mathbf{P}^T \mathbf{P}\right)}  \,\text{ in the real case,} \\
\hspace{6pt} \det\left(\bar{\mathbf{P}}^T \mathbf{P}\right) \hspace{2pt} \text{ in the complex case.}
\end{cases} 
\]
If $\dim V=\dim W$ then 
\[ \pi_{V,W}=
\begin{cases}
|\det \mathbf{P}| \hspace{4pt}\text{ in the real case,} \\
|\det \mathbf{P}|^2 \text{ in the complex case.}
\end{cases} \]
\end{proposition}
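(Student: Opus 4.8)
The plan is to reduce the statement to Theorem~\ref{pr:projection factors} by choosing a convenient matrix representation of $P$, namely the one coming from the principal bases, and then checking that the quantity $\det(\bar{\mathbf{P}}^T\mathbf{P})$ (resp.\ $\det(\mathbf{P}^T\mathbf{P})$) is basis-independent so that any orthonormal bases give the same value. First I would recall that Theorem~\ref{pr:projection factors} already gives $\pi_{V,W}=\pi_1\cdots\pi_p$ when $p=\dim V\leq\dim W$, and $\pi_{V,W}=0$ otherwise, with $\pi_i=\sigma_i$ (real case) or $\pi_i=\sigma_i^2$ (complex case) by \eqref{eq:pi sigma}. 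So it suffices to show that the right-hand sides of the claimed formulas evaluate to $\sigma_1\cdots\sigma_p$ (real) and $\sigma_1^2\cdots\sigma_p^2$ (complex).

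The cleanest route is to compute everything in the principal bases $(e_1,\ldots,e_p)$ of $V$ and $(f_1,\ldots,f_q)$ of $W$, in which $\mathbf{P}$ is the $q\times p$ diagonal matrix with entries $\sigma_1,\ldots,\sigma_p$ on the diagonal. Then $\mathbf{P}^T\mathbf{P}$ (real) and $\bar{\mathbf{P}}^T\mathbf{P}$ (complex) are both the $p\times p$ diagonal matrix $\operatorname{diag}(\sigma_1^2,\ldots,\sigma_p^2)$, so $\sqrt{\det(\mathbf{P}^T\mathbf{P})}=\sigma_1\cdots\sigma_p$ and $\det(\bar{\mathbf{P}}^T\mathbf{P})=\sigma_1^2\cdots\sigma_p^2$, matching Theorem~\ref{pr:projection factors} via \eqref{eq:pi sigma}. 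When $\dim V>\dim W$, the matrix $\mathbf{P}$ has rank at most $q<p$, so $\mathbf{P}^T\mathbf{P}$ (resp.\ $\bar{\mathbf{P}}^T\mathbf{P}$) is singular and its determinant is $0$, matching $\pi_{V,W}=0$. The square case is then immediate: when $p=q$, $\mathbf{P}$ is square, $\det(\mathbf{P}^T\mathbf{P})=(\det\mathbf{P})^2$ gives $|\det\mathbf{P}|$ in the real case, and $\det(\bar{\mathbf{P}}^T\mathbf{P})=\overline{\det\mathbf{P}}\cdot\det\mathbf{P}=|\det\mathbf{P}|^2$ in the complex case.

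The one genuine obstacle is independence of the chosen orthonormal bases, since the proposition asserts the formula for \emph{any} orthonormal bases, not just the principal ones. I would handle this by a change-of-basis argument: if $\mathbf{P}'$ is the matrix in other orthonormal bases, then $\mathbf{P}'=\mathbf{U}\,\mathbf{P}\,\mathbf{V}^{-1}$ for unitary (orthogonal, in the real case) change-of-basis matrices $\mathbf{U}$ on $W$ and $\mathbf{V}$ on $V$, because transitions between orthonormal bases are unitary. In the complex case, $\bar{\mathbf{P}'}^T\mathbf{P}'=\bar{\mathbf{V}}^{-T}\,\bar{\mathbf{P}}^T\,\bar{\mathbf{U}}^T\mathbf{U}\,\mathbf{P}\,\mathbf{V}^{-1}=\bar{\mathbf{V}}^{-T}(\bar{\mathbf{P}}^T\mathbf{P})\mathbf{V}^{-1}$, using $\bar{\mathbf{U}}^T\mathbf{U}=\mathbf{I}$; taking determinants and using $|\det\mathbf{V}|=1$ (so $\det\bar{\mathbf{V}}^{-T}\cdot\det\mathbf{V}^{-1}=|\det\mathbf{V}|^{-2}=1$) shows the value is unchanged. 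The real case is the same computation with transposes and orthogonal matrices, where $\det\mathbf{V}=\pm1$ and the square root of $\det(\mathbf{P}^T\mathbf{P})$ is likewise invariant.

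Alternatively, one can bypass the explicit change-of-basis computation by invoking the Cauchy–Binet formula, which expresses $\det(\mathbf{P}^T\mathbf{P})$ (real) as a sum of squares of $p\times p$ minors of $\mathbf{P}$ and directly relates it to the squared volume scaling factor; but I prefer the principal-basis plus unitary-invariance argument, as it ties the formula transparently to the singular values already appearing in Theorem~\ref{pr:projection factors} and keeps the real and complex cases parallel.
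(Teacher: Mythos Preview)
Your proposal is correct and follows essentially the same approach as the paper: compute in the principal bases, where $\mathbf{P}$ is diagonal with entries $\sigma_i$, and invoke \eqref{eq:pi sigma} and Theorem~\ref{pr:projection factors}. The only difference is that the paper simply asserts ``it is enough to consider principal bases,'' whereas you spell out the unitary change-of-basis argument that justifies this reduction.
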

\begin{proof}
It is enough to consider principal bases of $V$ and $W$, for which $\mathbf{P}$ is a diagonal matrix with the $\sigma_i$'s in the diagonal. Then the result follows from \eqref{eq:pi sigma} and \autoref{pr:projection factors}. 
\end{proof}

Principal projection factors are directly connected to Jordan's principal or canonical angles \cite{Galantai2006,Gluck1967,Wedin1983}. 

\begin{definition}
The \emph{principal angles} $0\leq \theta_1\leq\ldots\leq\theta_m\leq\frac{\pi}{2}$ of $V$ and $W$ are the angles $\theta_i=\theta_{e_i,f_i}$ between their principal vectors.
\end{definition}

As $\Prod{e_i,f_i}\geq 0$, in the complex case we also have $\theta_i=\gamma_{e_i,f_i}$.

By \eqref{eq:eifj}, $\sigma_i=\cos\theta_i$, so that, for $1\leq i\leq m$,
\begin{equation}\label{eq:principal factor angle}
\pi_i =
\begin{cases}
\cos\theta_i \ \,\text{ in the real case,} \\
\cos^2\theta_i \text{ in the complex case.}
\end{cases} 
\end{equation}

\subsection{Grassmann algebra and Grassmann angle}

The full power of projection factors is unleashed by connecting them to Grassmann's exterior algebra \cite{Suetin1989,Winitzki2010,Yokonuma1992}. 

A \emph{($p$-)blade} is a decomposable $p$-vector $\nu=v_1\wedge\ldots\wedge v_p\in\Lambda^p X$. 
The inner product (Hermitian, in the complex case) in the exterior power $\Lambda^p X$ is defined by extending linearly (sesquilinearly, in the complex case) the following formula for $p$-blades $\nu=v_1\wedge\ldots\wedge v_p$ and $\omega=w_1\wedge\ldots\wedge w_p$:
\begin{equation*}
\Prod{\nu,\omega} = \det\big(\Prod{v_i,w_j}\big) = \begin{vmatrix}
\Prod{v_1,w_1}& \cdots & \Prod{v_1,w_p} \\ 
\vdots & \ddots & \vdots \\ 
\Prod{v_p,w_1} & \cdots & \Prod{v_p,w_p}
\end{vmatrix}.
\end{equation*}

The norm $\|\nu\|=\sqrt{\Prod{\nu,\nu}}$ of a $p$-blade $\nu=v_1\wedge\ldots\wedge v_p$ gives, in the real case, the $p$-dimensional volume of $[v_1,\ldots,v_p]$. In the complex case, its square gives the $2p$-dimensional volume of $[v_1,\im v_1,\ldots,v_p, \im v_p]$. 

\begin{theorem}\label{pr:pi blades}
If $\dim V=\dim W=p$ then, for any nonzero blades $\nu\in \Lambda^p V$ and $\omega\in \Lambda^p W$,
\[ \pi_{V,W} = \begin{cases}
\hspace{2pt} \frac{|\Prod{\nu,\omega}|}{\|\nu\|\|\omega\|} \hspace{11pt}\text{in the real case}, \\[3pt]
\frac{|\Prod{\nu,\omega}|^2}{\|\nu\|^2\|\omega\|^2} \,\text{ in the complex case}.
\end{cases}  \]
\end{theorem}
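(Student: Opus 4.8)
The plan is to exploit the one-dimensionality of the top exterior powers to reduce everything to a single convenient computation. The crucial preliminary observation is that each right-hand side depends only on $V$ and $W$, not on the particular blades chosen. Since $\dim V = p$, the space $\Lambda^p V$ is one-dimensional, so any nonzero blade in it is a scalar multiple of any other; replacing $\nu$ by $c\nu$ multiplies both $|\Prod{\nu,\omega}|$ and $\|\nu\|$ by $|c|$, leaving the quotient unchanged, and likewise for $\omega$ in $\Lambda^p W$. Hence it is enough to verify the stated formula for one well-chosen pair of blades.

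First I would take the orthonormal principal bases $(e_1,\ldots,e_p)$ of $V$ and $(f_1,\ldots,f_p)$ of $W$, and set $\nu = e_1\wedge\ldots\wedge e_p$ and $\omega = f_1\wedge\ldots\wedge f_p$. Because the $e_i$'s are orthonormal, the Gram matrix $\big(\Prod{e_i,e_j}\big)$ is the identity, so $\|\nu\|^2 = \det\big(\Prod{e_i,e_j}\big) = 1$, and likewise $\|\omega\| = 1$. For the cross term, I would use \eqref{eq:eifj}: the matrix $\big(\Prod{e_i,f_j}\big)$ is diagonal with diagonal entries $\sigma_i$, so
\[ \Prod{\nu,\omega} = \det\big(\Prod{e_i,f_j}\big) = \sigma_1\cdot\ldots\cdot\sigma_p, \]
which is real and non-negative, giving $|\Prod{\nu,\omega}| = \sigma_1\cdot\ldots\cdot\sigma_p$.

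Substituting these values finishes the proof in each case. In the real case the quotient becomes $\sigma_1\cdot\ldots\cdot\sigma_p = \pi_1\cdot\ldots\cdot\pi_p = \pi_{V,W}$, using \eqref{eq:pi sigma} and \autoref{pr:projection factors}. In the complex case the squared quotient becomes $(\sigma_1\cdot\ldots\cdot\sigma_p)^2 = \sigma_1^2\cdot\ldots\cdot\sigma_p^2 = \pi_1\cdot\ldots\cdot\pi_p = \pi_{V,W}$, again by \eqref{eq:pi sigma} and \autoref{pr:projection factors}. Since both sides are independent of the blades, this establishes the identity for all nonzero $\nu\in\Lambda^p V$ and $\omega\in\Lambda^p W$.

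I do not expect a genuine obstacle here: the computation with principal bases is immediate once \eqref{eq:eifj} is invoked. The only step requiring a word of justification is the reduction to a single pair of blades, which rests entirely on $\Lambda^p V$ and $\Lambda^p W$ being one-dimensional when $\dim V = \dim W = p$; everything else is a direct consequence of the diagonal form of the projection in principal bases.
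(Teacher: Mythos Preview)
Your proof is correct and follows essentially the same approach as the paper: reduce to principal-basis blades using the one-dimensionality of $\Lambda^p V$ and $\Lambda^p W$, compute $\Prod{\nu,\omega}=\sigma_1\cdots\sigma_p$ via \eqref{eq:eifj}, and conclude from \eqref{eq:pi sigma} and \autoref{pr:projection factors}. The only difference is that you spell out the scaling argument and the norm computation more explicitly than the paper does.
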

\begin{proof}
As $\dim \Lambda^p V = \dim \Lambda^p W =1$, it is enough to consider unit $p$-blades $\nu=e_1\wedge\ldots\wedge e_p$ and $\omega=f_1\wedge\ldots\wedge f_p$ formed with principal vectors of $V$ and $W$. Then \eqref{eq:eifj} gives $\Prod{\nu,\omega}= \sigma_1\cdot\ldots\cdot\sigma_p$, and the result follows from \eqref{eq:pi sigma} and \autoref{pr:projection factors}.
\end{proof}

So, for subspaces of same dimension, $\pi_{V,W}$ is related to the inner product of blades.
In \autoref{sc:Properties of projection factors}, similar formulas show that for different dimensions it is related to the interior product (\autoref{pr:properties pi}\,\ref{it:interior product}), and $\pi_{V,W^\perp}$ is related to the exterior product (\autoref{pr:properties pi perp}\,\ref{it:pi perp exterior product}).

A line $L$ forms with a subspace $W$ a single principal angle, which is simply called \emph{the angle} $\theta_{L,W}$ between them. In this case, \autoref{pr:projection factors} turns \eqref{eq:principal factor angle} into
\begin{equation}\label{eq:pi line}
\pi_{L,W}=
\begin{cases}
\cos \theta_{L,W} \ \,\text{ in the real case,} \\
\cos^2 \theta_{L,W} \text{ in the complex case.}
\end{cases}
\end{equation}
Extending this to arbitrary subspaces requires an appropriate concept of angle between subspaces, as in high dimensions there are several distinct ones, such as minimal angle  \cite{Dixmier1949}, Friedrichs angle \cite{Friedrichs1937}, and others. The reason for such diversity is that the whole list of principal angles is needed to fully describe the relative position of two subspaces \cite{Wong1967}.

\begin{definition}
Let $V,W\subset X$ be nonzero subspaces, $p=\dim V$ and $q=\dim W$.
The \emph{Grassmann angle} $\Theta_{V,W}\in[0,\frac{\pi}{2}]$ of $V$ with $W$ is 
\[ \Theta_{V,W}=\begin{cases}
\arccos(\cos\theta_1\cdot\ldots\cdot\cos\theta_p) \ \text{ if } p\leq q, \\
\frac{\pi}{2} \hspace{97pt} \text{ if } p> q,
\end{cases} \]
where the $\theta_i$'s are the principal angles of $V$ and $W$.
We also define $\Theta_{\{0\},\{0\}}=\Theta_{\{0\},W}=0$ and $\Theta_{V,\{0\}}=\frac{\pi}{2}$.
\end{definition}

This angle was introduced in \cite{Mandolesi_Grassmann}, unifying and extending other angle concepts found in the literature \cite{Gluck1967,Gunawan2005,Hitzer2010a,Jiang1996}. It has many interesting properties, and some strange features, which derive from the fact that $\Theta_{V,W}$ is actually an angle in the exterior power $\Lambda^p X$, where $p=\dim V$, between the line $\Lambda^p V$ and the subspace\footnote{If $\dim W<p$ then $\Lambda^p W=\{0\}$, in which case the angle is defined as being $\frac{\pi}{2}$.} $\Lambda^p W$, 
\begin{equation}\label{eq:Theta exterior power}
\Theta_{V,W}=\theta_{\Lambda^p V,\Lambda^p W}.
\end{equation}

\begin{theorem}\label{pr:projection Grassmann angle}
Given any subspaces $V,W\subset X$,
\[ \pi_{V,W}=
\begin{cases}
\cos \Theta_{V,W} \ \,\text{ in the real case,} \\
\cos^2 \Theta_{V,W} \text{ in the complex case.}
\end{cases} \]
\end{theorem}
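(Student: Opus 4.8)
The plan is to reduce the general statement to the already-established Theorem~\ref{pr:projection factors} and the definition of the Grassmann angle by simply comparing the two product formulas. The key observation is that both $\pi_{V,W}$ and $\cos\Theta_{V,W}$ (or its square) are defined as products of cosines of the principal angles $\theta_1,\ldots,\theta_p$, so the identity should follow by transcribing one definition into the other.

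First I would dispose of the degenerate cases. If either $V$ or $W$ is $\{0\}$, the claim is checked directly against the conventions $\Theta_{\{0\},W}=0$, $\Theta_{V,\{0\}}=\frac{\pi}{2}$, and the behaviour of $\pi_{V,W}$ on the trivial subspace (noting $\cos\frac{\pi}{2}=0$ matches $\pi_{V,\{0\}}=0$). Next, if $\dim V=p>\dim W=q$, then Theorem~\ref{pr:projection factors} gives $\pi_{V,W}=0$, while the definition gives $\Theta_{V,W}=\frac{\pi}{2}$, so $\cos\Theta_{V,W}=0=\cos^2\Theta_{V,W}$, and both cases agree.

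For the main case $1\le p\le q$, I would invoke Theorem~\ref{pr:projection factors} to write $\pi_{V,W}=\pi_1\cdots\pi_p$, and then apply \eqref{eq:principal factor angle}, which expresses each principal projection factor $\pi_i$ in terms of the principal angle $\theta_i$: namely $\pi_i=\cos\theta_i$ in the real case and $\pi_i=\cos^2\theta_i$ in the complex case. In the real case this yields
\[
\pi_{V,W}=\cos\theta_1\cdots\cos\theta_p=\cos\Theta_{V,W},
\]
the last equality being precisely the definition of the Grassmann angle. In the complex case, factoring out the squares gives
\[
\pi_{V,W}=\cos^2\theta_1\cdots\cos^2\theta_p=\left(\cos\theta_1\cdots\cos\theta_p\right)^2=\cos^2\Theta_{V,W},
\]
again by the definition of $\Theta_{V,W}$.

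I do not expect any serious obstacle here, since the proof is essentially a matter of matching definitions: the Grassmann angle was constructed so that its cosine is the product $\cos\theta_1\cdots\cos\theta_p$, which is exactly the product of principal projection factors appearing in Theorem~\ref{pr:projection factors}. The only point requiring a little care is the bookkeeping of the real-versus-complex square and the verification that the dimensional and degenerate conventions for $\Theta_{V,W}$ line up with the vanishing and boundary behaviour of $\pi_{V,W}$; once those conventions are checked, the equality is immediate.
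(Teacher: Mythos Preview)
Your proof is correct and follows essentially the same approach as the paper: both derive the result from \eqref{eq:principal factor angle} and Theorem~\ref{pr:projection factors}, matching the product $\prod_i \pi_i$ against the definition of $\Theta_{V,W}$. Your explicit treatment of the degenerate cases ($V=\{0\}$, $W=\{0\}$, and $p>q$) is a welcome addition, as the paper's one-line proof leaves these to the reader.
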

\begin{proof}
Follows from \eqref{eq:principal factor angle} and \autoref{pr:projection factors}.

\end{proof}

This lets us get many properties of projection factors (see \autoref{sc:Properties of projection factors}) from results about Grassmann angles.
For example, \eqref{eq:pi line} and \eqref{eq:Theta exterior power} give
\[ \pi_{V,W}=\pi_{\Lambda^p V,\Lambda^p W}. \]
Thus the factor by which top dimensional volumes in $V$ shrink when projecting to $W$ is the same by which lengths (areas, in the complex case) in the line $\Lambda^p V$ contract when projecting to $\Lambda^p W$. 

From formulas for computing $\Theta_{V,W}$ given in \cite{Mandolesi_Products}, we get the following ones, which generalize \autoref{pr:formula orthonormal bases} for arbitrary bases.

\begin{proposition}\label{pr:formula general bases}
Given bases $(v_1,\ldots,v_p)$ of $V$ and $(w_1,\ldots,w_q)$ of $W$, let $A=\big(\Prod{w_i,w_j}\big)$, $B=\big(\Prod{w_i,v_j}\big)$ and $D=\big(\Prod{v_i,v_j}\big)$. Then
\[ \pi_{V,W}=
\begin{cases}
\sqrt{\frac{\det(B^T \! A^{-1}B)}{\det D}}  \ \text{ in the real case,} \\[5pt]
\ \ \frac{\det(\bar{B}^T \! A^{-1}B)}{\det D} \ \ \text{ in the complex case.}
\end{cases} \]
If $\dim V=\dim W$ then
\[ \pi_{V,W}=
\begin{cases}
\frac{\left|\det B \right|}{\sqrt{\det A}\,\cdot\sqrt{\det D}} \ \,\text{ in the real case,} \\[5pt]
\ \ \frac{\left|\det B \right|^2}{\det A\,\cdot\,\det D} \ \ \ \text{ in the complex case.}
\end{cases} \]
\end{proposition}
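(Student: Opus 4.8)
The plan is to reduce everything to the orthonormal-basis formulas of \autoref{pr:formula orthonormal bases} by a change of basis, which seems cleaner and more self-contained than unpacking the Grassmann-angle formulas of \cite{Mandolesi_Products}. First I would fix orthonormal bases $(e_1,\ldots,e_p)$ of $V$ and $(f_1,\ldots,f_q)$ of $W$, and let $\mathbf{P}$ be the $q\times p$ matrix of the orthogonal projection $P:V\to W$ in these bases, so that $\mathbf{P}_{kl}=\Prod{f_k,e_l}$ and \autoref{pr:formula orthonormal bases} applies directly to $\mathbf{P}$. I would then express the given bases through the orthonormal ones, writing $v_j=\sum_l C_{lj}e_l$ and $w_i=\sum_k E_{ki}f_k$ for an invertible $p\times p$ matrix $C$ and an invertible $q\times q$ matrix $E$.

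The next step is to record how $A$, $B$, $D$ factor through $C$, $E$ and $\mathbf{P}$. Expanding the Hermitian products and using orthonormality gives, in the complex case,
\[ D=\bar{C}^T C, \qquad A=\bar{E}^T E, \qquad B=\bar{E}^T\mathbf{P}\,C, \]
with conjugate transposes replaced by plain transposes in the real case. In particular $A$ is the Gram matrix of a basis of $W$, hence invertible, so that $A^{-1}=E^{-1}(\bar{E}^T)^{-1}$ is legitimate.

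The key computation is the cancellation of the three copies of $E$. Substituting the factorizations into $\bar{B}^T A^{-1}B$ and using $\bar{B}^T=\bar{C}^T\bar{\mathbf{P}}^T E$ yields
\[ \bar{B}^T A^{-1}B=\bar{C}^T\bar{\mathbf{P}}^T E\cdot E^{-1}(\bar{E}^T)^{-1}\cdot\bar{E}^T\mathbf{P}\,C=\bar{C}^T\bar{\mathbf{P}}^T\mathbf{P}\,C. \]
Taking determinants and noting $\det D=|\det C|^2$, the factor $|\det C|^2$ cancels and leaves $\det(\bar{B}^T A^{-1}B)/\det D=\det(\bar{\mathbf{P}}^T\mathbf{P})$, which is exactly $\pi_{V,W}$ by \autoref{pr:formula orthonormal bases}. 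The real case is identical with $\bar{(\cdot)}^T$ replaced by $(\cdot)^T$ and a square root taken at the end. When $\dim V=\dim W$ the matrix $B=\bar{E}^T\mathbf{P}\,C$ is square, so multiplicativity of the determinant gives $|\det B|^2=\det A\cdot|\det\mathbf{P}|^2\cdot\det D$ at once, and this rearranges into the stated quotient (again with the obvious real analogue).

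I expect the computation to be essentially bookkeeping, and the only delicate point is keeping the conjugate transposes consistent so that the $E$ factors genuinely collapse to the identity; this is where an error would most easily creep in. A secondary point worth remarking is that the same general formula automatically covers the degenerate case $\dim V>\dim W$: then $\mathbf{P}$, and hence $B$, has rank at most $q<p$, so the $p\times p$ matrix $\bar{B}^T A^{-1}B$ is singular and $\det(\bar{B}^T A^{-1}B)=0$, recovering $\pi_{V,W}=0$ in agreement with \autoref{pr:projection factors}.
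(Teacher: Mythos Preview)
Your proof is correct. The paper does not actually supply a self-contained argument for this proposition: it simply remarks that the formulas follow from the Grassmann-angle computations carried out in the companion paper \cite{Mandolesi_Products}, combined with \autoref{pr:projection Grassmann angle}. Your route is genuinely different and, as you note, more elementary: you bypass the Grassmann-angle machinery entirely and reduce directly to the orthonormal-basis case of \autoref{pr:formula orthonormal bases} via an explicit change of basis, tracking the Gram matrices through the factorizations $D=\bar C^{T}C$, $A=\bar E^{T}E$, $B=\bar E^{T}\mathbf P\,C$ and watching the $E$ factors cancel. What this buys is self-containment---no external reference is needed and the argument is pure matrix algebra---whereas the paper's route emphasizes the conceptual point that these formulas are really statements about the angle $\Theta_{V,W}$ in $\Lambda^{p}X$, at the cost of sending the reader elsewhere for the details. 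Your observation that the degenerate case $p>q$ falls out automatically from the rank of $\bar B^{T}A^{-1}B$ is a nice bonus not made explicit in the paper.
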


\section{Generalized Pythagorean theorems}\label{sc:Pythagorean thms}

Using projection factors, we easily prove some known generalizations of the Py\-thag\-o\-re\-an theorem, while also extending them to the complex case.

\begin{definition}
An \emph{orthogonal partition} of $X$ is a collection $\{V_1,\ldots,V_k\}$ of mutually orthogonal subspaces such that $X=V_1\oplus\ldots\oplus V_k$.
\end{definition}

\begin{proposition}\label{pr:sum proj factors line}
For any line $L\subset X$ and any orthogonal partition $X=V_1\oplus\cdots\oplus V_k$,
\begin{itemize}
\item $\sum_{j=1}^k \pi_{L,V_j}^2 = 1$ in the real case, 
\item $\sum_{j=1}^k \pi_{L,V_j} = 1$ in the complex case.
\end{itemize}
\end{proposition}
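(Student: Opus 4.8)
The plan is to reduce both identities to the classical Pythagorean theorem \eqref{eq:norm projections} by rewriting each projection factor $\pi_{L,V_j}$ as a norm of a projection. I would fix a unit vector $v$ spanning $L$ (so $L=\R v$ in the real case and $L=\C v$ in the complex case), and let $P_j$ denote the orthogonal projection onto $V_j$. Since $\{V_j\}$ is an orthogonal partition of $X$, equation \eqref{eq:norm projections} gives $\sum_{j=1}^k \|P_j v\|^2 = \|v\|^2 = 1$. It then remains only to express $\|P_j v\|^2$ in terms of $\pi_{L,V_j}$, and here the two cases diverge.

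First I would treat the real case. By the first identity of \autoref{pr:Pv pi} applied with $W=V_j$, we have $\|P_j v\| = \|v\|\cdot\pi_{\R v,V_j} = \pi_{L,V_j}$, hence $\|P_j v\|^2 = \pi_{L,V_j}^2$. Substituting into the classical identity yields $\sum_{j=1}^k \pi_{L,V_j}^2 = 1$, as claimed. For the complex case, $L=\C v$ and each $V_j$ is a complex subspace, so I would instead invoke \eqref{eq:projection complex line}, which gives $\|P_j v\|^2 = \|v\|^2\cdot\pi_{\C v,V_j} = \pi_{L,V_j}$. Now the classical identity becomes $\sum_{j=1}^k \pi_{L,V_j} = 1$, with the projection factors appearing to the first power rather than squared.

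The computation has essentially no obstacle; the interesting point is conceptual rather than technical. The disappearance of the square in the complex case is precisely the extra squaring built into \eqref{eq:projection complex line}, reflecting that a complex line carries $2$-dimensional Lebesgue measure, and this is exactly the unusual feature highlighted in the introduction. The one thing I would make sure to note is that the orthogonal projections $P_j$ are the same whether each $V_j$ is regarded as a complex subspace or as its underlying real space, as observed just after the definition of projection factors, so that \eqref{eq:norm projections}, stated for the Hermitian norm, applies verbatim in the complex case.
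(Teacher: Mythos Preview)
Your proof is correct and is precisely the argument the paper gives: combine \eqref{eq:norm projections} with the two identities of \autoref{pr:Pv pi} applied to a nonzero $v\in L$. The only difference is cosmetic---you normalize $v$ and spell out the substitution, while the paper compresses this to a single sentence.
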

\begin{proof}
Follows from \eqref{eq:norm projections} and \autoref{pr:Pv pi}, with any nonzero $v\in L$.
\end{proof}

\begin{example}
If $L$ is the complex line of a quantum state, and the $V_j$'s are the eigenspaces of an observable, then the $\pi_{L,V_j}$'s give the quantum probabilities, as noted in \autoref{ex:quantum probability}. So the complex case above corresponds to the condition of unit total probability.
\end{example}

This proposition leads to a generalized Pythagorean theorem relating the squared 1-dimensional measure of a set in a real line to the squared measures of its projections on the subspaces of a partition, and a complex version with non-squared 2-dimensional measures.

\begin{theorem}[Pythagorean theorem for lines]\label{th:Pythagorean lines}
Let $L\subset X$ be a line and $X=V_1\oplus\cdots\oplus V_k$ be an orthogonal partition.
Given any Lebesgue measurable set $S\subset L$, let $S_j$ be its orthogonal projection on $V_j$. Then
\begin{itemize}
\item $|S|_1^2 = \sum_{j=1}^k |S_j|_1^2$ \ in the real case,
\item $|S|_2 = \sum_{j=1}^k |S_j|_2$ \ in the complex case.
\end{itemize} 
\end{theorem}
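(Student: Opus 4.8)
The plan is to reduce the statement directly to the definition of the projection factor together with \autoref{pr:sum proj factors line}, so that essentially no new computation is needed. Writing $P_j\colon L\to V_j$ for the orthogonal projection and $p=\dim_\R L$ (so $p=1$ in the real case and $p=2$ in the complex case, since a complex line is isometric to a real plane), the set $S_j$ is exactly $P_j(S)$, and the relevant Lebesgue measure on each side is the $p$-dimensional one. The whole argument hinges on the single observation that, by the very definition of $\pi_{L,V_j}$, the ratio $|S_j|_p/|S|_p$ equals $\pi_{L,V_j}$ whenever $|S|_p\neq 0$, independently of the choice of $S$.

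First I would dispose of the degenerate case $|S|_p=0$. Since an orthogonal projection is a contraction, being $1$-Lipschitz for the real inner product underlying both the real and complex cases, it cannot increase $p$-dimensional measure, so $|S_j|_p\le |S|_p=0$ for every $j$. Hence both sides of each identity vanish and there is nothing to prove.

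Assuming now $|S|_p\neq 0$, the definition of the projection factor gives $|S_j|_p=\pi_{L,V_j}\,|S|_p$ for each $j$. In the real case I would square this to get $|S_j|_1^2=\pi_{L,V_j}^2\,|S|_1^2$, sum over $j$, and factor out $|S|_1^2$, obtaining $\sum_j |S_j|_1^2 = |S|_1^2\sum_j \pi_{L,V_j}^2$; the real identity of \autoref{pr:sum proj factors line} then collapses the trailing sum to $1$. In the complex case I would sum the unsquared relation directly, $\sum_j |S_j|_2 = |S|_2\sum_j \pi_{L,V_j}$, and invoke the complex identity of \autoref{pr:sum proj factors line}, which again contributes the factor $1$.

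Because the argument is this direct, I do not anticipate a genuine obstacle; the only point requiring care is the measure-zero case, where one must appeal to the fact that orthogonal projection does not inflate measure rather than to the projection-factor formula, which is only defined for $|S|_p\neq 0$. The conceptual content has really been front-loaded into \autoref{pr:sum proj factors line} and into the definition of $\pi_{L,V_j}$, so the passage from vectors to measurable sets is immediate precisely because the projection factor is, by construction, the set-independent contraction ratio of $p$-dimensional measure.
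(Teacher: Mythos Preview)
Your proposal is correct and follows exactly the route the paper intends: the theorem is stated immediately after \autoref{pr:sum proj factors line} with the remark that ``this proposition leads to'' it, and no further proof is given, so the passage from $\sum_j\pi_{L,V_j}^{(2)}=1$ to the measure identities via the definition of $\pi_{L,V_j}$ is precisely what is meant. Your explicit treatment of the $|S|_p=0$ case via the $1$-Lipschitz property of orthogonal projection is a small but welcome addition that the paper leaves unmentioned.
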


\begin{example}
\autoref{fig:realpythagorean} illustrates the real case with $S$ consisting of 2 colinear segments: its squared total length is the sum of the squared total lengths of its orthogonal projections $S_x, S_y$ and $S_z$ on the axes. The same would hold if $S$ were a more complicated measurable subset of a line.
\begin{figure} [h]
\centering
\includegraphics[width=.4\linewidth]{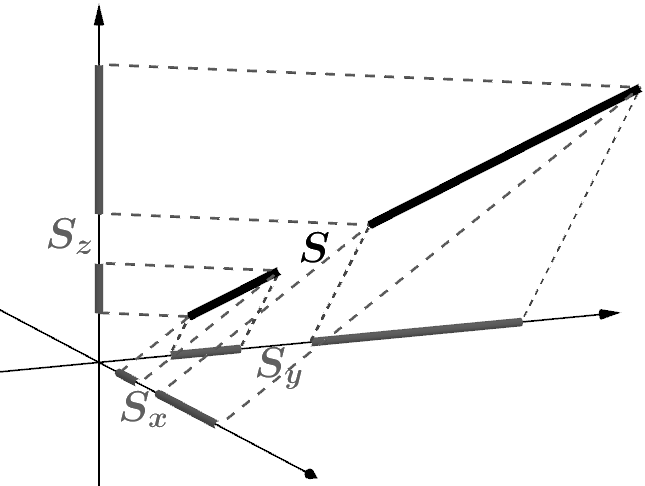}
\caption{Real Pythagorean Theorem, $|S|^2_1=|S_x|^2_1+|S_y|^2_1+|S_z|^2_1$}
\label{fig:realpythagorean}
\end{figure}
\end{example}

\begin{example}
Given $\C$-orthogonal unit vectors $v_1,v_2\in X$, and $c_1,c_2\in\C$ with $|c_1|^2+|c_2|^2=1$, let $v=c_1v_1+c_2v_2$. As \autoref{pr:product projection factor} gives $\pi_{\C v,\C v_1}=|c_1|^2$ and $\pi_{\C v,\C v_2}=|c_2|^2$, any area $A$ in $\C v$ projects to $A_1=|c_1|^2\!\cdot\! A$ in $\C v_1$ and to $A_2=|c_2|^2\!\cdot\! A$ in $\C v_2$, with $A=A_1+A_2$ (\autoref{fig:complexpythagorean}).
\begin{figure} [h]
\centering
\includegraphics[width=.4\linewidth]{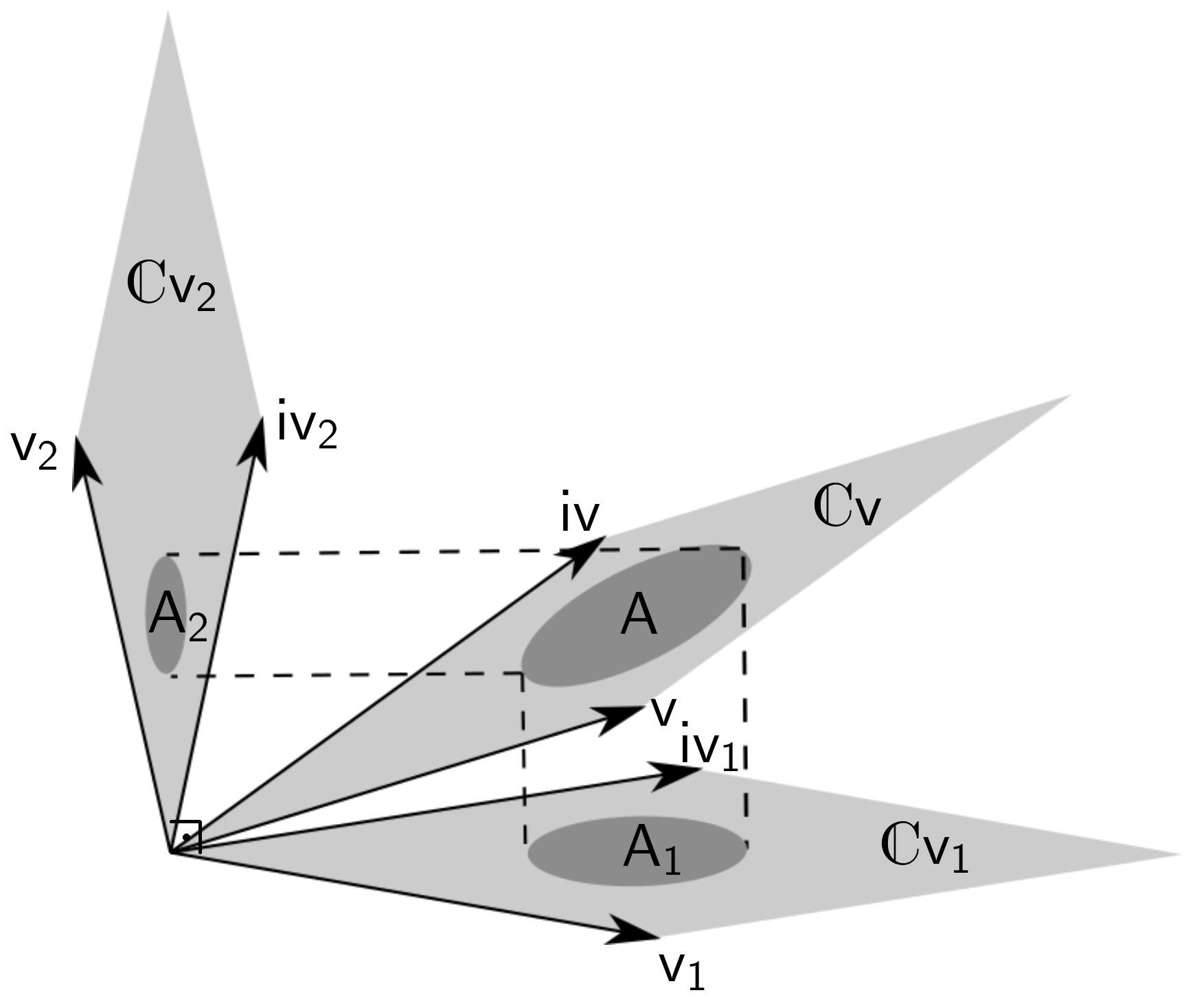}
\caption{Complex Pythagorean Theorem, $A=A_1+A_2$}
\label{fig:complexpythagorean}
\end{figure}

Similarities of this example with quantum theory are used in \cite{Mandolesi_Born} to obtain the Born rule in Everettian quantum mechanics.
\end{example}

Generalized Pythagorean theorems for subspaces other than lines involve projections on coordinate subspaces instead of a partition.

Let $\beta=(v_1,\ldots,v_n)$ be a basis of $X$, and $1\leq q\leq n$. For each multi-index $I =(i_1,\ldots,i_q)$, with $1\leq i_1 < \ldots<i_q\leq n$, we write $V_I = \Span(v_{i_1},\ldots,v_{i_q})$	and $\nu_I = v_{i_1}\wedge\ldots\wedge v_{i_q}\in \Lambda^q V_I$. The $\binom{n}{q}$ subspaces $V_I$ are the \emph{$q$-dimensional coordinate subspaces} of $\beta$, and the $\nu_I$'s constitute a basis of $\Lambda^q X$, which is orthonormal if $\beta$ is orthonormal.

\begin{proposition}\label{pr:projection factor coordinate subspaces}
If $V\subset X$ is a subspace and $p=\dim V$ then 
\begin{itemize}
\item $\sum_I \pi_{V,V_I}^2 = 1$ in the real case,
\item $\sum_I \pi_{V,V_I} = 1$ in the complex case,
\end{itemize}
where the sums run over all $p$-dimensional coordinate subspaces $V_I$ of an orthogonal basis of $X$.
\end{proposition}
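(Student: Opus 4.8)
The plan is to lift the problem into the exterior power $\Lambda^p X$ and reduce it to the already-settled case of a single line, \autoref{pr:sum proj factors line}. The key observation is that since $\dim V = p$, the subspace $\Lambda^p V$ is a \emph{line} $\mathcal{L}$ in $\Lambda^p X$ (real or complex, according to the case), and likewise each $p$-dimensional coordinate subspace $V_I$ yields a line $L_I = \Lambda^p V_I$, namely the one spanned by $\nu_I$. First I would invoke the relation $\pi_{V,W}=\pi_{\Lambda^p V,\Lambda^p W}$ recorded just after \autoref{pr:projection Grassmann angle}, specialized to $W=V_I$ (legitimate, since $\dim V_I = p = \dim V$), to rewrite every summand as $\pi_{V,V_I}=\pi_{\mathcal{L},L_I}$.

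Next I would verify that the lines $\{L_I\}$ form an orthogonal partition of $\Lambda^p X$. Because the chosen basis $\beta$ of $X$ is orthogonal, the determinant formula defining the inner (Hermitian) product on $\Lambda^p X$ forces the blades to be mutually orthogonal: for $I\neq J$ there is an index in $I$ absent from $J$, which produces a zero row in the matrix $\big(\Prod{v_{i_a},v_{j_b}}\big)$ whose determinant equals $\Prod{\nu_I,\nu_J}$, so $\Prod{\nu_I,\nu_J}=0$. As there are $\binom{n}{p}=\dim\Lambda^p X$ of these blades, they form an orthogonal basis, whence $\Lambda^p X = \bigoplus_I L_I$ is an orthogonal partition (a $\C$-orthogonal one in the complex case).

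With these two facts in place the conclusion is immediate: applying \autoref{pr:sum proj factors line} to the line $\mathcal{L}=\Lambda^p V$ and the orthogonal partition $\{L_I\}$ of $\Lambda^p X$ gives $\sum_I \pi_{\mathcal{L},L_I}^2 = 1$ in the real case and $\sum_I \pi_{\mathcal{L},L_I} = 1$ in the complex case, and substituting $\pi_{\mathcal{L},L_I}=\pi_{V,V_I}$ disposes of both cases at once.

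The only genuinely delicate point is the second step, confirming that an orthogonal basis of $X$ induces an orthogonal partition of $\Lambda^p X$; the subtlety is that the $\nu_I$ need not be unit vectors when $\beta$ is merely orthogonal rather than orthonormal, but \autoref{pr:sum proj factors line} requires only orthogonality of the partition's summands, not normalization, so this causes no difficulty. Alternatively, one could bypass \autoref{pr:sum proj factors line} and argue directly: choosing a unit blade $\nu$ for $V$, \autoref{pr:pi blades} expresses each factor through $|\Prod{\nu,\nu_I}|/\|\nu_I\|$, and the classical Pythagorean identity \eqref{eq:norm projections} applied to $\nu$ against the orthonormal basis $\{\nu_I/\|\nu_I\|\}$ of $\Lambda^p X$ gives $\sum_I |\Prod{\nu,\nu_I/\|\nu_I\|}|^2 = \|\nu\|^2 = 1$, which matches the squared (real) and unsquared (complex) statements exactly.
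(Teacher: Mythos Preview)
Your proposal is correct. The ``alternative'' you sketch at the end---pick a unit blade $\nu\in\Lambda^p V$, invoke \autoref{pr:pi blades}, and apply \eqref{eq:norm projections} to the orthonormal family $\{\nu_I/\|\nu_I\|\}$---is exactly the paper's proof (the paper first normalizes the basis of $X$ so that each $\|\nu_I\|=1$). Your main route is only a mild repackaging of the same computation: it passes to $\Lambda^p X$ via the identity $\pi_{V,W}=\pi_{\Lambda^p V,\Lambda^p W}$ and then quotes \autoref{pr:sum proj factors line}, but that proposition is itself proved from \eqref{eq:norm projections} and \autoref{pr:Pv pi}, so both arguments unwind to the same Parseval identity in $\Lambda^p X$.
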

\begin{proof}
Without loss of generality, we can assume the basis is orthonormal.
Let $\nu\in\Lambda^p V$ with $\|\nu\|=1$. 
Then $\sum_I |\Prod{\nu,\nu_I}|^2 = 1$, and the result follows from \autoref{pr:pi blades}.
\end{proof}

This gives another Pythagorean theorem, relating the squared $p$-di\-men\-sion\-al measure of a set in a real $p$-dimensional subspace to the squared measures of its projections on $p$-dimensional coordinate subspaces, and also a complex version with non-squared $2p$-dimensional measures.

\begin{theorem}[Pythagorean theorem for subspaces]\label{th:Pythagorean subspaces}
Let $V\subset X$ be a subspace and $p=\dim V$. For any Lebesgue measurable set $S\subset V$,
\begin{itemize}
\item $|S|_p^2 = \sum_I |S_I|_p^2$ \ \ in the real case,
\item $|S|_{2p} = \sum_I |S_I|_{2p}$ in the complex case,
\end{itemize}
where the sums run over the orthogonal projections $S_I$ of $S$ on all $p$-di\-men\-sion\-al coordinate subspaces of an orthogonal basis of $X$.
\end{theorem}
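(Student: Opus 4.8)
\section*{Proof proposal}

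The plan is to deduce this directly from \autoref{pr:projection factor coordinate subspaces} using only the definition of the projection factor, so that the substantive work is already done and what remains is essentially a substitution. The one preliminary fact I need is that, for any subspace $W\subset X$ with orthogonal projection $P:V\to W$, the identity $|P(S)|_m=\pi_{V,W}\cdot|S|_m$ (with $m=\dim_\R V$) holds for \emph{every} Lebesgue measurable $S\subset V$, not just for sets of positive measure as in the definition. When $|S|_m\neq 0$ this is immediate from the definition together with \autoref{rm:pi independent of S}; when $|S|_m=0$ both sides vanish, since an orthogonal projection is $1$-Lipschitz and so carries null sets to null sets. This extension to arbitrary measurable sets---including the degenerate case $\pi_{V,W}=0$, where $P$ drops dimension and $|P(S)|_m=0$ anyway---is really the only point needing care, and I regard it as the main (if minor) obstacle.

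Next I would fix the dimensional bookkeeping, which is where the two cases diverge. In the real case $V$ and each $p$-dimensional coordinate subspace $V_I$ both have real dimension $p$, so, writing $S_I$ for the orthogonal projection of $S$ on $V_I$, the preliminary identity gives $|S_I|_p=\pi_{V,V_I}\cdot|S|_p$. In the complex case $V$ has real dimension $2p$ and each $V_I$ is a complex $p$-dimensional, hence real $2p$-dimensional, subspace, so with $m=2p$ the same identity gives $|S_I|_{2p}=\pi_{V,V_I}\cdot|S|_{2p}$.

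Finally I would substitute into \autoref{pr:projection factor coordinate subspaces}. In the real case, squaring $|S_I|_p=\pi_{V,V_I}|S|_p$ and summing over all $I$ yields $\sum_I|S_I|_p^2=\big(\sum_I\pi_{V,V_I}^2\big)|S|_p^2=|S|_p^2$. In the complex case, summing $|S_I|_{2p}=\pi_{V,V_I}|S|_{2p}$ without squaring yields $\sum_I|S_I|_{2p}=\big(\sum_I\pi_{V,V_I}\big)|S|_{2p}=|S|_{2p}$. The presence of a square in the real case but not the complex one mirrors exactly the two normalizations $\sum_I\pi_{V,V_I}^2=1$ and $\sum_I\pi_{V,V_I}=1$, both of which trace back through \autoref{pr:pi blades} to the single Parseval identity $\sum_I|\Prod{\nu,\nu_I}|^2=1$ for a unit blade $\nu\in\Lambda^p V$.
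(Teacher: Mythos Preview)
Your proposal is correct and follows essentially the same approach as the paper, which presents the theorem as an immediate consequence of \autoref{pr:projection factor coordinate subspaces} via the definition of projection factors. Your added care with the null-measure case (via the $1$-Lipschitz property of orthogonal projections) is a reasonable detail the paper leaves implicit.
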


The real case gives, in particular, the following known generalizations of the Pythagorean theorem.

\begin{example}
A tetrahedron $OABC$ is \emph{trirectangular} at $O$ if all edges at this vertex are orthogonal to each other. De Gua's Theorem (1783) \cite{M.1931} says the squared area of the face opposite such vertex equals the sum of the squared areas of the other faces (\autoref{fig:tetraedro-edit}). It extends to higher dimensional simplices \cite{Amir-Moez1996,Alvarez1997,Cho1991,Donchian1935,Yeng1990}.
\begin{figure}[h]
\centering
\includegraphics[width=0.3\linewidth]{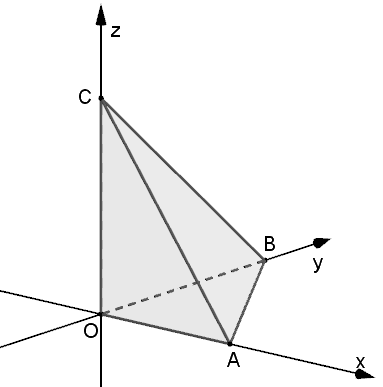}
\caption{$\area(ABC)^2=\area(OAB)^2+\area(OAC)^2+\area(OBC)^2$}
\label{fig:tetraedro-edit}
\end{figure}
\end{example}

\begin{example}
Since the XVIII century, it is known that the square of a planar area is the sum of the squares of its orthogonal projections on 3 mutually perpendicular planes \cite[p.450]{Eves1990}. Conant and Beyer \cite{Conant1974} have extended this to measurable sets (\autoref{fig:projecao_snowflake}) and any real dimension. They also tried to generalize it to the complex case, but the complex measure they used led to anomalous results.
We note that much of their work can be replaced by \autoref{rm:pi independent of S}.

\begin{figure}[h]
\centering
\includegraphics[width=0.5\linewidth]{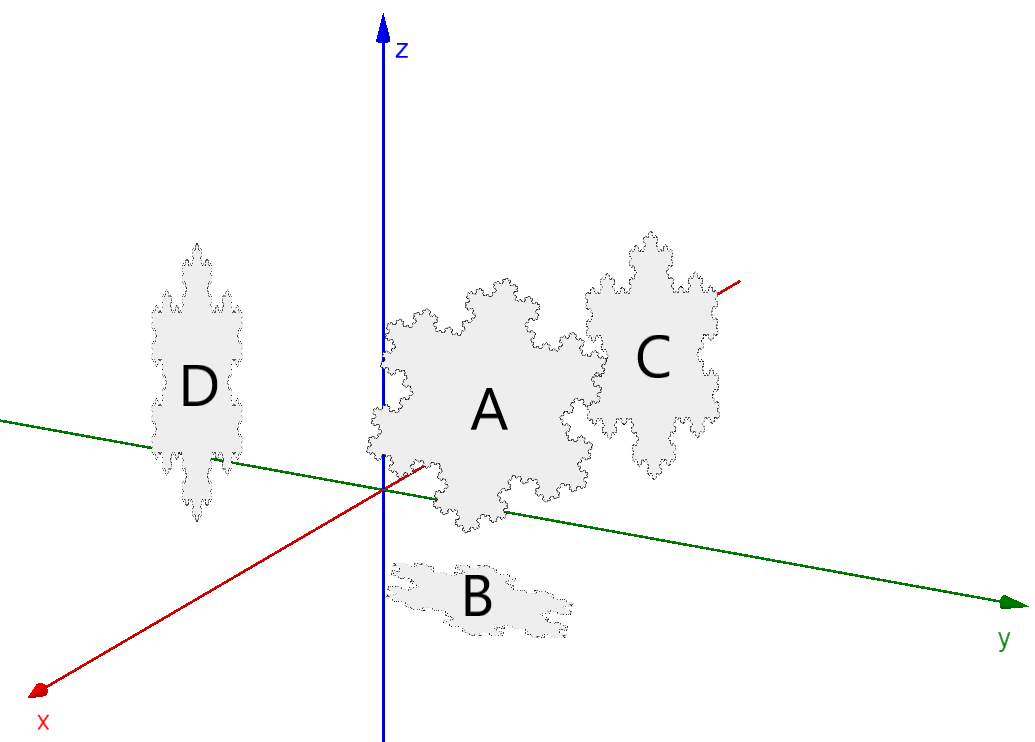}
\caption{Projections of a Koch snowflake, $|A|_2^2=|B|_2^2+|C|_2^2+|D|_2^2$}
\label{fig:projecao_snowflake}
\end{figure}
\end{example}

Yet another Pythagorean theorem, for projections on coordinate subspaces of a different dimension than $V$, can be obtained using \autoref{pr:projection Grassmann angle} and identities for Grassmann angles proven in \cite{Mandolesi_Products}.

\begin{proposition}
Let $V\subset X$ be a subspace, $p=\dim V$, and  $1\leq q\leq n=\dim X$. 
Then we have the following, where the sums run over all $q$-dimensional coordinate subspaces $V_I$ of an orthogonal basis of $X$.
\begin{enumerate}[i)]
\item If $p\leq q$ then 
\begin{itemize}
\item $\sum_I \pi_{V,V_I}^2 = \binom{n-p}{n-q}$ in the real case,
\item $\sum_I \pi_{V,V_I} = \binom{n-p}{n-q}$ in the complex case.
\end{itemize}
\item If $p>q$ then 
\begin{itemize}
\item $\sum_I \pi_{V_I,V}^2 = \binom{p}{q}$ in the real case,
\item $\sum_I \pi_{V_I,V} = \binom{p}{q}$ in the complex case.
\end{itemize}
\end{enumerate}
\end{proposition}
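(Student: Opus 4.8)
The plan is to reduce all four displayed identities to a \emph{single} computation. By \autoref{pr:projection Grassmann angle} we have $\pi^2=\cos^2\Theta$ in the real case and $\pi=\cos^2\Theta$ in the complex case, so each assertion says precisely that a sum of terms $\cos^2\Theta$ equals the stated binomial coefficient; the distinction between the squared (real) and unsquared (complex) left-hand sides disappears once one works with $\cos^2\Theta$. Since the coordinate subspaces $V_I$ depend only on the spans of the basis vectors, rescaling them changes nothing, so I may assume the orthogonal basis of $X$ is orthonormal; then the blades $\nu_I$ form an orthonormal basis of $\Lambda^q X$ (resp. $\Lambda^p X$).

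The key lemma I would record first is that, for subspaces $A,B$ with $\dim A=a\le\dim B$, one has $\cos^2\Theta_{A,B}=\|P\mu\|^2/\|\mu\|^2$, where $\mu$ is any nonzero blade spanning the line $\Lambda^a A$ and $P$ is the orthogonal projection of $\Lambda^a X$ onto $\Lambda^a B$. This follows from \eqref{eq:Theta exterior power}, which identifies $\Theta_{A,B}$ with the line--subspace angle $\theta_{\Lambda^a A,\Lambda^a B}$, together with \autoref{pr:Pv pi} and \eqref{eq:pi line} applied inside $\Lambda^a X$: in the real case $\cos\Theta=\|P\mu\|/\|\mu\|$ so the square is inserted by hand, while in the complex case the projection factor of the complex line $\Lambda^a A$ is \emph{already} $\|P\mu\|^2/\|\mu\|^2$ by \eqref{eq:projection complex line}. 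Writing $\|P\mu\|^2=\sum_\alpha|\Prod{\mu,\omega_\alpha}|^2$ for an orthonormal basis $\{\omega_\alpha\}$ of $\Lambda^a B$ then turns each $\cos^2\Theta$ into a sum of squared blade components.

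For $p\le q$ I would take $\nu\in\Lambda^p V$ with $\|\nu\|=1$ and use that $\{\nu_J : J\subset I,\ |J|=p\}$ is an orthonormal basis of $\Lambda^p V_I$, giving $\cos^2\Theta_{V,V_I}=\sum_{J\subset I,\,|J|=p}|\Prod{\nu,\nu_J}|^2$. Summing over all $q$-multi-indices $I$ and exchanging the order of summation, each fixed $p$-multi-index $J$ sits in exactly $\binom{n-p}{q-p}=\binom{n-p}{n-q}$ of the $I$'s; pulling out this constant and using $\sum_{|J|=p}|\Prod{\nu,\nu_J}|^2=\|\nu\|^2=1$ (Parseval in $\Lambda^p X$, as in the proof of \autoref{pr:projection factor coordinate subspaces}) yields $\binom{n-p}{n-q}$. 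For $p>q$ I would instead take the unit $q$-blade $\nu_I$ and an orthonormal basis $\{\mu_\beta\}$ of $\Lambda^q V$, so $\cos^2\Theta_{V_I,V}=\sum_\beta|\Prod{\nu_I,\mu_\beta}|^2$; summing over $I$ and exchanging order, $\sum_I|\Prod{\nu_I,\mu_\beta}|^2=\|\mu_\beta\|^2=1$ because the $\nu_I$ are an orthonormal basis of $\Lambda^q X$, so the total is $\dim\Lambda^q V=\binom{p}{q}$.

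The main obstacle is bookkeeping rather than conceptual: I must check that the line--subspace reduction produces $\cos^2\Theta$ uniformly in both fields, so that the real identities (stated with squared factors) and the complex ones (stated without) genuinely collapse to the same sum, and that the exchange of summation together with $\binom{n-p}{q-p}=\binom{n-p}{n-q}$ is handled cleanly. A minor point to verify is that $\{\nu_J : J\subset I\}$ really is orthonormal in $\Lambda^p V_I$, which is exactly where the reduction to an orthonormal basis of $X$ is used.
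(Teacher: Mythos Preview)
Your argument is correct. The reduction via \autoref{pr:projection Grassmann angle} to a single identity $\sum_I\cos^2\Theta=\binom{\,\cdot\,}{\,\cdot\,}$, the line--subspace interpretation \eqref{eq:Theta exterior power} giving $\cos^2\Theta_{A,B}=\|P\mu\|^2/\|\mu\|^2$ in $\Lambda^a X$, the Parseval expansion against the orthonormal blades $\nu_J$, and the double-counting of $p$-multi-indices inside $q$-multi-indices all go through cleanly; in particular your handling of the real/complex dichotomy is exactly right, and $\binom{n-p}{q-p}=\binom{n-p}{n-q}$ is the only combinatorial input needed.

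As for comparison: the paper does not actually prove this proposition. It merely states that the result ``can be obtained using \autoref{pr:projection Grassmann angle} and identities for Grassmann angles proven in \cite{Mandolesi_Products}.'' Your proposal is therefore not a different route so much as a self-contained execution of what the paper defers to an external reference: you have effectively \emph{derived} the cited Grassmann-angle identities inside $\Lambda^p X$ (resp.\ $\Lambda^q X$) using only Parseval and the combinatorics of coordinate blades, in the same spirit as the proof of \autoref{pr:projection factor coordinate subspaces}. The gain is that your argument stays entirely within the tools already developed in this paper; the paper's choice to cite buys brevity at the cost of self-containment.
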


Only the case $p\leq q$ gives a Pythagorean theorem.

\begin{theorem}
Let $V\subset X$ be a subspace, $S\subset V$ be a Lebesgue measurable set, and $\dim V=p\leq q\leq n=\dim X$. Then 
\begin{itemize}
\item $|S|_p^2 = \binom{n-p}{n-q}^{-1}\cdot \sum_I |S_I|_p^2$ \ \ in the real case,
\item $|S|_{2p} = \binom{n-p}{n-q}^{-1}\cdot \sum_I |S_I|_{2p}$ in the complex case,
\end{itemize}
where the sums run over the orthogonal projections $S_I$ of $S$ on all $q$-di\-men\-sion\-al coordinate subspaces of an orthogonal basis of $X$.
\end{theorem}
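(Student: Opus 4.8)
The plan is to derive this theorem directly from the preceding proposition, in exactly the same way that the Pythagorean theorem for subspaces (\autoref{th:Pythagorean subspaces}) follows from \autoref{pr:projection factor coordinate subspaces}. The bridge between the projection factors and the Lebesgue measures of the projected sets is simply the definition of $\pi_{V,V_I}$.

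First I would fix the orthogonal basis of $X$ and, for each $q$-dimensional coordinate subspace $V_I$, let $P_I\colon V\to V_I$ be the orthogonal projection, so that $S_I=P_I(S)$. Writing $d=\dim_\R V$, the definition of the projection factor gives $|S_I|_d=|P_I(S)|_d=\pi_{V,V_I}\cdot|S|_d$. Here one must keep track of the two cases: in the real case $d=p$, so the relevant measure is $|\cdot|_p$ and $|S_I|_p=\pi_{V,V_I}\cdot|S|_p$; in the complex case $d=2p$, so the measure is $|\cdot|_{2p}$ and $|S_I|_{2p}=\pi_{V,V_I}\cdot|S|_{2p}$. When $|S|_d=0$ the identity is trivial, and when $|S|_d\neq0$ it is precisely the definition together with the independence of $S$ recorded in \autoref{rm:pi independent of S}; in either situation $|S_I|_d$ is well-defined because $\dim_\R P_I(V)\le d$.

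Next I would square and sum in the real case, and simply sum in the complex case, then substitute the value of the sum of projection factors from the case $p\le q$ of the preceding proposition. In the real case,
\[ \sum_I |S_I|_p^2 = \Big(\sum_I \pi_{V,V_I}^2\Big)\,|S|_p^2 = \binom{n-p}{n-q}\,|S|_p^2, \]
and in the complex case,
\[ \sum_I |S_I|_{2p} = \Big(\sum_I \pi_{V,V_I}\Big)\,|S|_{2p} = \binom{n-p}{n-q}\,|S|_{2p}. \]
Dividing through by $\binom{n-p}{n-q}$ yields the two stated identities.

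I do not expect any serious obstacle, since the result is essentially a restatement of the preceding proposition in terms of measures rather than projection factors. The only points requiring care are (i) using the correct ambient real dimension, $p$ versus $2p$, so that the squared-versus-unsquared pattern of the real and complex cases emerges correctly, and (ii) observing that, unlike the principal-angle and blade arguments used earlier, no new geometric input is needed: once the definition of $\pi_{V,V_I}$ is invoked, the entire content is already carried by the sum rules $\sum_I \pi_{V,V_I}^2=\binom{n-p}{n-q}$ (real) and $\sum_I \pi_{V,V_I}=\binom{n-p}{n-q}$ (complex).
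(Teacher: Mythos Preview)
Your proposal is correct and follows exactly the paper's (implicit) approach: the theorem is stated immediately after the proposition giving $\sum_I \pi_{V,V_I}^2=\binom{n-p}{n-q}$ (real) and $\sum_I \pi_{V,V_I}=\binom{n-p}{n-q}$ (complex), and passes to measures via the definition of $\pi_{V,V_I}$ just as \autoref{th:Pythagorean subspaces} was obtained from \autoref{pr:projection factor coordinate subspaces}. Your handling of the real dimension $d=p$ versus $d=2p$ and of the degenerate case $|S|_d=0$ is also in order.
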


The real case corresponds, when $S$ is a parallelotope, to a result of \cite{Drucker2015}. 
The binomial coefficients appear because each projection on a $q$-dimensional coordinate subspace can be further decomposed into the $p$-dimensional coordinate subspaces it contains. And each of these belongs to $\binom{n-p}{q-p}=\binom{n-p}{n-q}$ of the $q$-dimensional ones.

\begin{example}
Given a line segment of length $L$ in $\R^3$, let $L_{xy}, L_{xz}$ and $L_{yz}$ be the lengths of its orthogonal projections on the coordinate planes (\autoref{fig:line_planes}). Then $L^2=(L_{xy}^2+L_{xz}^2+L_{yz}^2)/2$.

\begin{figure}[h]
\centering
\includegraphics[width=0.4\linewidth]{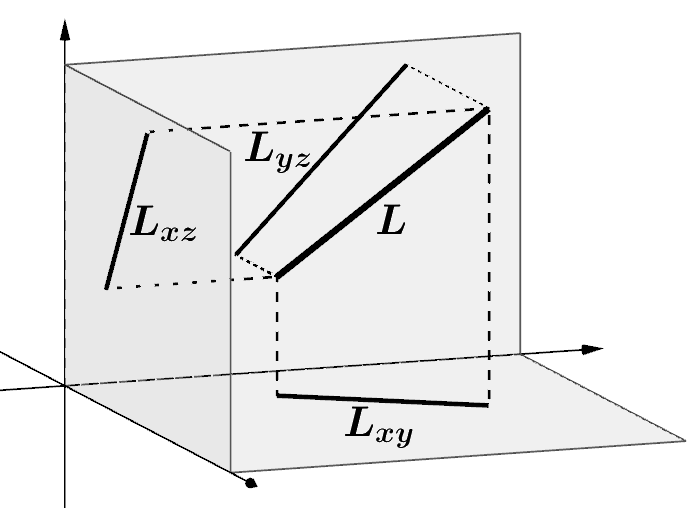}
\caption{$L^2=(L_{xy}^2+L_{xz}^2+L_{yz}^2)/2$}
\label{fig:line_planes}
\end{figure}

Using \autoref{th:Pythagorean lines} to relate $L$, $L_{xy}$, $L_{xz}$ and $L_{yz}$ to the orthogonal projections of these segments on the coordinate axes, one can easily see the reason for the factor $\frac{1}{2}$ in this example.
\end{example}

\section{Final remarks}\label{sc:epilogue}

For simplicity, we have worked with linear subspaces of a finite dimensional inner product space $X$, but our results obviously extend to affine subspaces, and $X$ can be an infinite dimensional Hilbert space (however, $V$ must be finite dimensional and $W$ must be closed for $\pi_{V,W}$ to be defined).

That measures are not squared in the complex Pythagorean theorems is unusual, but the reason is clearly that each complex dimension corresponds to 2 real ones, both contracting by the same factor. 

The same dimensional consideration suggests quaternionic versions should involve square roots of measures. Indeed, the tools used to get to \autoref{th:Pythagorean lines} also work in a complete quaternionic inner product space \cite{Istratescu1987,Rodman2014}. And essentially the same arguments can be used to show that, in such space, the square root of the 4-dimensional Lebesgue measure of a measurable set in a quaternionic line is the sum of the square roots of the measures of its orthogonal projections on the subspaces of an orthogonal partition. It would be interesting to know whether \autoref{th:Pythagorean subspaces} can also be extended to the quaternionic case.

The following examples show the complex case is actually simpler when calculations are carried out: there are less coordinate subspaces, and the measures add up in a simpler way. The real case leads to a messier calculation, in which it is almost surprising that the terms that appear when we expand the squares of the $|S_{ij}|_2$'s can be recombined into just the square of $|S|_2$. 

\begin{example}\label{ex:real subspace}
In $\R^4$, let $v=(a,b,c,d)\neq 0$, $u=(-b,a,-d,c)$ and $V=\Span(v,u)$.
If $\{v_1,v_2,v_3,v_4\}$ is the canonical basis of $\R^4$, let $P_{ij}$ be the orthogonal projection onto the coordinate subspace $V_{ij}=\Span(v_i,v_j)$. The projections $S_{ij}$ of the square $S=[v,u]$ on the $V_{ij}$'s have measures $|S_{ij}|_2=\|P_{ij}v\wedge P_{ij}u\|$ given by
\begin{align*}
|S_{12}|_2&=a^2+b^2, &
|S_{13}|_2=|S_{24}|_2&=|bc-ad|, \\
|S_{34}|_2&=c^2+d^2, &
|S_{14}|_2=|S_{23}|_2&=|ac+bd|. 
\end{align*}
A calculation shows the sum of the squares of these six measures is equal to $(a^2+b^2+c^2+d^2)^2=|S|_2^2$, in accordance with \autoref{th:Pythagorean subspaces}. 
\end{example}

\begin{example}\label{ex:complex line}
	The previous example can be reframed in complex terms. Identifying $\C^2$ with $\R^4$ we have $v=(a+ib,c+id)$, $u=iv$ and $V=\C v$.
	Of the $V_{ij}$'s, only $V_{12}$ and $V_{34}$ are complex subspaces (invariant under multiplication by $\im$), corresponding to the complex coordinate lines of the canonical basis $\{(1,0),(0,1)\}$ of $\C^2$. One can immediately see that, in accordance with \autoref{th:Pythagorean lines}, $|S_{12}|_2+|S_{34}|_2=a^2+b^2+c^2+d^2=|S|_2$.
\end{example}

The many properties projection factors have (including those in the appendix), and the ease with which the generalized Pythagorean theorems were obtained using them, suggest they should have other applications. 

As noted, they have unexpected connections with quantum theory. In \cite{Mandolesi_Born} we show the complex case of \autoref{th:Pythagorean lines} can be used, with some extra physical assumptions, to obtain the Born rule and solve the probability problem of Everettian quantum mechanics, with projection factors actually playing the role of quantum probabilities.

\appendix

\section{Properties of projection factors}\label{sc:Properties of projection factors}

Many properties of projection factors are listed below. Some can be easily obtained from the definition and results presented in this article, while others follow from \autoref{pr:projection Grassmann angle}  and properties of Grassmann angles proven in \cite{Mandolesi_Grassmann,Mandolesi_Products}.

\begin{proposition}\label{pr:properties pi}
Let $V,W\subset X$ be nonzero subspaces, $p=\dim V$, $q=\dim W$, and $P:V\rightarrow W$ be the orthogonal projection.
\begin{enumerate}[i)]
\item $\pi_{\{0\},\{0\}}=\pi_{\{0\},W}=1$ and $\pi_{V,\{0\}}=0$.
\item $\pi_{V,W}=0 \ \Leftrightarrow\ V\cap W^\perp\neq\{0\}$.
\item $\pi_{V,W}=1 \ \Leftrightarrow\ V\subset W$.
\item $\pi_{V,W}=\pi_{V,P(V)}$.
\item If $\dim V=\dim W$ then $\pi_{V,W}=\pi_{W,V}$.
\item In the complex case (with $W$ complex),  for any  $v\in X$ and any nonzero $u\in \C v$ we have $\pi_{\C v,W}=\pi^2_{\R v,W}$ and $\pi_{\R u,W} = \pi_{\R v,W}$.
\item If $V'$ and $W'$ are the orthogonal complements of $V\cap W$ in $V$ and $W$, respectively, then $\pi_{V,W}=\pi_{V',W'}$.
\item If $V=\bigoplus_i V_i$, with the $V_i$'s being mutually orthogonal subspaces spanned by principal vectors of $V$ w.r.t. $W$, then $\pi_{V,W} = \prod_i \pi_{V_i,W}$. 
\item If $V=A\oplus B$, with $A\perp B$,  then
\begin{equation*}
\pi_{V,W} = \pi_{A,W}\cdot \pi_{B,W}\cdot \pi_{P(A),P(B)^\perp}.
\end{equation*}
\item For any subspace $U\subset W$, $\pi_{V,U} = \pi_{V,W}\cdot\pi_{P(V),U}$.
\item $\pi_{V,W}\leq \pi_m$, where $\pi_m$ is the smallest principal projection factor of $V$ and $W$.
\item $\pi_{V,W'}\leq \pi_{V,W}$ for any subspace $W'\subset W$, with equality if, and only if, $V\cap W^\perp\neq\{0\}$ or $P(V)\subset W'$.
\item $\pi_{V',W}\geq\pi_{V,W}$ for any subspace $V'\subset V$, with equality if, and only if, $V'\cap W^\perp\neq\{0\}$ or $V\cap V'^\perp\subset W$.
\item Let $\mathcal{P}\nu=P v_1\wedge\ldots\wedge P v_p$ be the orthogonal projection on $\Lambda^p W$ of  a blade $\nu=v_1\wedge\ldots\wedge v_p\in\Lambda^p V$. Then \label{it:projection blade}
\begin{itemize}
\item $\|\mathcal{P}\nu\| = \|\nu\|\cdot \pi_{V,W}$ \hspace{9pt} in the real case,
\item $\|\mathcal{P}\nu\|^2 = \|\nu\|^2\cdot \pi_{V,W}$  \, in the complex case.
\end{itemize}
\item Given nonzero blades $\nu\in\Lambda^p V$ and $\omega\in\Lambda^q W$,\label{it:interior product}
\[
\pi_{V,W} = \begin{cases}
\hspace{2pt}\frac{\|\nu\intprod\omega\|}{\|\nu\|\|\omega\|}  \ \ \,\text{ in the real case,} \\[5pt]
\frac{\|\nu\intprod\omega\|^2}{\|\nu\|^2\|\omega\|^2}\ \text{ in the complex case,} 
\end{cases}
\]
where $\intprod$ is the interior product of blades \cite{Mandolesi_Products}.
\item If $U\subset V$ is a subspace and $r=\dim U$,
\[
\pi_{U,W} = \begin{cases}
\sqrt{\sum_{I} \pi^2_{U,V_I} \cdot \pi^2_{V_I,W}}\ \ \,\text{ in the real case,} \\[5pt]
\hspace{9pt}\sum_{I} \pi_{U,V_I} \cdot \pi_{V_I,W} \hspace{8pt} \text{ in the complex case.} 
\end{cases}
\]
where the sums run over all $r$-dimensional coordinate subspaces $V_I$ of a principal basis of $V$ with respect to $W$.
\end{enumerate}
\end{proposition}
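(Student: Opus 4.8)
The plan is to exploit the three reductions already in hand---\autoref{pr:projection factors} (so $\pi_{V,W}=\pi_1\cdots\pi_p$ when $p\le q$, and $0$ otherwise), \autoref{pr:projection Grassmann angle}, \autoref{pr:Pv pi}, and \autoref{pr:pi blades}---and to organize the seventeen parts into three tiers. The conventional and symmetry parts are immediate: (i) matches the boundary conventions built into $\Theta_{V,W}$ via \autoref{pr:projection Grassmann angle}, and (v) is read off from the manifestly symmetric blade formula of \autoref{pr:pi blades}. For the ``kernel'' tier (ii)--(iv), (vi)--(viii), (xi) I would argue straight from the singular-value/principal-vector picture: (ii) holds because $\pi_{V,W}=0$ iff $P|_V$ drops rank, i.e. $\ker P\cap V=W^\perp\cap V\neq\{0\}$; (iii) because $\prod_i\pi_i=1$ with each $\pi_i\le 1$ forces every $\sigma_i=1$, i.e. $V\subset W$; (iv) because $Pv\in P(V)$ and $v-Pv\perp W\supset P(V)$ show the projections onto $W$ and onto $P(V)$ agree on $V$; (vi) is a rescaling inside \autoref{pr:Pv pi}; (xi) is $\prod_i\pi_i\le\min_i\pi_i$; and (vii)--(viii) follow by grouping principal factors, since a principal basis of $V$ restricted to a summand spanned by principal vectors is again a principal basis of that summand with respect to $W$.

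Next I would prove the blade identity (xiv) early, as it is the workhorse for everything that follows. Writing $\mathcal{P}=\Lambda^p P$ for the orthogonal projection of $\Lambda^p X$ onto $\Lambda^p W$ and evaluating on a unit principal blade $\nu=e_1\wedge\cdots\wedge e_p$ gives $\mathcal{P}\nu=\sigma_1\cdots\sigma_p\,(f_1\wedge\cdots\wedge f_p)$, whence $\|\mathcal{P}\nu\|=\prod_i\sigma_i=\pi_{V,W}$ in the real case and the squared version in the complex case. From (xiv) the multiplicative parts drop out: (x) is the chain rule $P_U=P_UP_W$ valid for $U\subset W$, so volume-contraction factors compose, $\pi_{V,U}=\pi_{V,P(V)}\cdot\pi_{P(V),U}=\pi_{V,W}\cdot\pi_{P(V),U}$ using (iv); (xii) is then immediate from (x), its equality clause furnished by (ii) and (iii). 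For (ix) I would put $\nu_V=\nu_A\wedge\nu_B$, apply (xiv), and invoke the parallelotope-volume identity $\|\alpha\wedge\beta\|=\|\alpha\|\,\|\beta\|\,\pi_{P(B),P(A)^\perp}$ for blades $\alpha,\beta$ spanning $P(A),P(B)$, together with its symmetry $\pi_{P(B),P(A)^\perp}=\pi_{P(A),P(B)^\perp}$, to isolate the third factor. Then (xiii) follows by decomposing $V=V'\oplus(V\cap V'^\perp)$ and applying (ix), noting that $V\cap V'^\perp\subset W$ forces $P(V')\perp(V\cap V'^\perp)$, so both extra factors equal $1$.

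The two genuinely computational parts remain. For (xv) I would use the defining adjunction $\langle\nu\intprod\omega,\eta\rangle=\langle\omega,\nu\wedge\eta\rangle$ with principal blades $\nu=e_1\wedge\cdots\wedge e_p$ and $\omega=f_1\wedge\cdots\wedge f_q$: the contraction collapses to $\nu\intprod\omega=\sigma_1\cdots\sigma_p\,(f_{p+1}\wedge\cdots\wedge f_q)$, so $\|\nu\intprod\omega\|=\prod_i\sigma_i=\pi_{V,W}$, generalizing \autoref{pr:pi blades}. For (xvi), the crux, I would take a unit blade $\mu\in\Lambda^r U$ and expand it in the orthonormal basis $\{\nu_I\}$ of $\Lambda^r V$ coming from a principal basis of $V$, so that $c_I:=\langle\mu,\nu_I\rangle$ satisfies $|c_I|=\pi_{U,V_I}$ (real) by \autoref{pr:pi blades}. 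Applying $\mathcal{P}=\Lambda^r P_W$ and $Pe_{i_j}=\sigma_{i_j}f_{i_j}$ yields $\mathcal{P}\nu_I=\pi_{V_I,W}\,(f_{i_1}\wedge\cdots\wedge f_{i_r})$ with pairwise orthonormal image blades, so $\|\mathcal{P}\mu\|^2=\sum_I|c_I|^2\,\pi_{V_I,W}^2$; since $\|\mathcal{P}\mu\|=\pi_{U,W}$ by (xiv), the real formula follows, the complex case coming from the squared versions of (xiv) and \autoref{pr:pi blades}.

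The main obstacles I anticipate are twofold. First, producing the exact third factor $\pi_{P(A),P(B)^\perp}$ in (ix): this requires the parallelotope-volume identity and its symmetry, and it is precisely this factor that drives the equality analysis in (xiii). Second, organizing (xvi) so that no cross terms survive---the decisive point is that the image blades $f_{i_1}\wedge\cdots\wedge f_{i_r}$ are mutually orthonormal, which is exactly why a plain sum of products appears and why the measures recombine without interference.
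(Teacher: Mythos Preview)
Your proposal is correct and, in fact, considerably more explicit than what the paper does. The paper's treatment of this proposition is essentially a deferral: it states that the easier items follow from the definitions and results of \S2, while the harder ones (notably (ix), (x), (xii), (xiii), (xv), (xvi)) follow from \autoref{pr:projection Grassmann angle} together with properties of Grassmann angles established in the companion papers \cite{Mandolesi_Grassmann,Mandolesi_Products}. In other words, the paper imports the multiplicative, monotonicity, and contraction identities for $\Theta_{V,W}$ from those references and then translates them into projection-factor language via $\pi_{V,W}=\cos\Theta_{V,W}$ (or $\cos^2\Theta_{V,W}$). Your route is genuinely different: you stay inside the present paper, building everything on the singular-value picture (\autoref{pr:projection factors}), the blade formula (\autoref{pr:pi blades}), and the blade-projection identity (xiv), which you correctly identify as the workhorse and prove early. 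What this buys you is self-containment---no external Grassmann-angle machinery is needed---at the cost of reproving in miniature several facts that the cited papers package more abstractly.

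One point to watch: your argument for (ix) invokes the identity $\|\alpha\wedge\beta\|=\|\alpha\|\,\|\beta\|\,\pi_{[\alpha],[\beta]^\perp}$, which in the paper's organization is exactly \autoref{pr:properties pi perp}\,\ref{it:pi perp exterior product} and is itself deferred to the Grassmann-angle references. If you want your proof of (ix) to remain self-contained you should derive that identity directly (e.g.\ via a Gram-determinant or principal-basis computation of $\|\alpha\wedge\beta\|$), otherwise you are tacitly relying on the same external input the paper uses. Apart from this, your equality analyses in (xii) and (xiii) and the orthogonality observation driving (xvi) are sound and are precisely the substantive steps the paper leaves to its citations.
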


There are also several properties for projection factors with the orthogonal complement of a subspace.

\begin{proposition}\label{pr:properties pi perp}
Let $V,W\subset X$ be nonzero subspaces, $p=\dim V$, $q=\dim W$ and $m=\min\{p,q\}$.
\begin{enumerate}[i)]
\item $\pi_{V,W^\perp} = \pi_{W,V^\perp}$.
\item $\pi_{V^\perp,W^\perp} = \pi_{W,V}$.
\item If $\pi_1,\ldots,\pi_m$ are the principal projection factors of $V$ and $W$ then
\[ \pi_{V,W^\perp}=
\begin{cases}
\prod_{i=1}^m \sqrt{1-\pi_i^2} \,\text{ in the real case,} \\[5pt]
\prod_{i=1}^m (1-\pi_i) \ \ \text{ in the complex case.}
\end{cases} \]
\item For $\zeta(V,W) = \begin{cases}
\pi^2_{V,W} + \pi^2_{V,W^\perp} \ \text{(real case)} \\
\pi_{V,W} + \pi_{V,W^\perp} \text{ (complex case)}
\end{cases}$ we have:
\begin{itemize}
\item $0\leq \zeta(V,W) \leq 1$;
\item $\zeta(V,W) = 1\ \Leftrightarrow\ \dim V=1$, or $V\subset W$, or $V\perp W$;
\item $\zeta(V,W)  = 0\ \Leftrightarrow\ V\cap W\neq\{0\}$ and $V\cap W^\perp\neq\{0\}$.
\end{itemize}
\item Given nonzero blades $\nu\in\Lambda^p V$ and $\omega\in\Lambda^q W$,\label{it:pi perp exterior product}
\[
\pi_{V,W^\perp} = \begin{cases}
\hspace{2pt}\frac{\|\nu\wedge\omega\|}{\|\nu\|\|\omega\|}  \ \ \,\text{ in the real case,} \\[5pt]
\frac{\|\nu\wedge\omega\|^2}{\|\nu\|^2\|\omega\|^2}\ \text{ in the complex case.} 
\end{cases}
\]
\item Given any bases $(v_1,\ldots,v_p)$ of $V$ and $(w_1,\ldots,w_q)$ of $W$, let $A=\big(\Prod{w_i,w_j}\big)$, $B=\big(\Prod{w_i,v_j}\big)$, and $D=\big(\Prod{v_i,v_j}\big)$. Then
\[ \pi_{V,W^\perp}=
\begin{cases}
\sqrt{\frac{\det(A-BD^{-1}B^T )}{\det A}}  \ \,\text{ in the real case,} \\[5pt]
\ \ \frac{\det(A-BD^{-1}\bar{B}^T )}{\det A} \ \ \text{ in the complex case.}
\end{cases} \]
If both bases are orthonormal then
\[ \pi_{V,W^\perp}=
\begin{cases}
\sqrt{\det\left(\mathds{1}_{q\times q}-B B^T\right)}  \ \,\text{ in the real case,} \\
\ \ \det\left(\mathds{1}_{q\times q}-B\bar{B}^T\right) \ \, \text{ in the complex case.}
\end{cases} \]
\end{enumerate}
\end{proposition}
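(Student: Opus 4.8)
The plan is to deduce all six items from one geometric fact about how principal vectors transform under orthogonal complementation, taking them in the order iii), i)--ii), v)--vi), iv).

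\emph{The key step, giving iii).} Let $(e_1,\ldots,e_p)$, $(f_1,\ldots,f_q)$ be principal bases of $V,W$ with singular values $\sigma_i=\cos\theta_i$ and $Pe_i=\sigma_i f_i$. Decomposing each $e_i$ along $W\oplus W^\perp$ gives $e_i=\sigma_i f_i+\sin\theta_i\,g_i$, where $g_i=(e_i-\sigma_i f_i)/\sin\theta_i\in W^\perp$ is a unit vector (for $\theta_i\neq0$), since $\|e_i-\sigma_i f_i\|^2=1-\sigma_i^2$. Using \eqref{eq:eifj} one checks $\langle e_i,g_j\rangle=\sin\theta_i\,\delta_{ij}$ and $\langle g_i,g_j\rangle=\delta_{ij}$, so $(e_1,\ldots,e_p)$ is simultaneously a principal basis of $V$ with respect to $W^\perp$, now with singular values $\sin\theta_i$ and principal angles $\tfrac{\pi}{2}-\theta_i$ (the $e_i$ with $\sigma_i=1$ or $\sigma_i=0$ lie in $W$ or $W^\perp$ and give factors $0$ or $1$ respectively). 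Then \autoref{pr:projection factors} with \eqref{eq:pi sigma} and \eqref{eq:principal factor angle} yields iii): the $i$-th principal projection factor of $V,W^\perp$ is $\sin\theta_i=\sqrt{1-\sigma_i^2}$ in the real case (so $\pi_{V,W^\perp}=\prod_i\sqrt{1-\pi_i^2}$), and $\sin^2\theta_i=1-\sigma_i^2=1-\pi_i$ in the complex case (so $\pi_{V,W^\perp}=\prod_i(1-\pi_i)$).

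\emph{Items i) and ii).} Since $V,W$ and $W,V$ have the same principal angles, iii) writes $\pi_{V,W^\perp}$ and $\pi_{W,V^\perp}$ as the same product, proving i); applying i) to the pair $(V^\perp,W)$ gives $\pi_{V^\perp,W^\perp}=\pi_{W,(V^\perp)^\perp}=\pi_{W,V}$, which is ii). Both may instead be quoted as symmetries of the Grassmann angle through \autoref{pr:projection Grassmann angle}.

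\emph{Items v) and vi).} For v), take unit principal blades $\nu=e_1\wedge\cdots\wedge e_p$ and $\omega=f_1\wedge\cdots\wedge f_q$. Substituting $e_i=\sigma_i f_i+\sin\theta_i\,g_i$ and using $f_i\wedge f_i=0$ turns each $e_i$ into $\sin\theta_i\,g_i$, so $\nu\wedge\omega=\big(\prod_i\sin\theta_i\big)\,g_1\wedge\cdots\wedge g_p\wedge f_1\wedge\cdots\wedge f_q$; the trailing blade is a wedge of orthonormal vectors, hence of unit norm, so $\|\nu\wedge\omega\|=\prod_i\sin\theta_i=\pi_{V,W^\perp}$ by iii), which is v). For vi), apply v) to the given bases, using $\|\nu\|^2=\det D$, $\|\omega\|^2=\det A$, and the fact that $\|\nu\wedge\omega\|^2$ is the Gram determinant of $(v_1,\ldots,v_p,w_1,\ldots,w_q)$, namely $\det\!\left(\begin{smallmatrix} D & \bar{B}^{T}\\ B & A\end{smallmatrix}\right)$. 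The Schur complement of $D$ factors this as $\det D\cdot\det(A-BD^{-1}\bar{B}^{T})$, and dividing by $\|\nu\|^2\|\omega\|^2=\det D\cdot\det A$ yields the displayed formula (the real case has transposes in place of conjugate transposes, and the orthonormal case is $A=\mathds{1}$, $D=\mathds{1}$).

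\emph{Item iv) and the main obstacle.} By iii) and \eqref{eq:principal factor angle}, in the real case $\pi_{V,W}^2=\prod_i\cos^2\theta_i$ and $\pi_{V,W^\perp}^2=\prod_i\sin^2\theta_i$ (a dimension count forcing a vanishing factor exactly when one of these must be $0$), and the complex case is the same without squares. Setting $a_i=\cos^2\theta_i\in[0,1]$, the bound $0\le\zeta\le1$ reduces to the elementary inequality $\prod_i a_i+\prod_i(1-a_i)\le1$, proved by induction; its equality analysis gives $\zeta=1$ iff there is a single factor ($\dim V=1$), all $a_i=1$ ($V\subset W$), or all $a_i=0$ ($V\perp W$). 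Finally $\zeta=0$ means $\pi_{V,W}=\pi_{V,W^\perp}=0$, which by item ii) of \autoref{pr:properties pi} is precisely $V\cap W^\perp\neq\{0\}$ together with $V\cap W\neq\{0\}$. The only real difficulty throughout is uniform bookkeeping of the dimension regimes ($p\le q$ versus $p>q$, and $p$ versus $n-q$), so that the products in iii) and iv) carry the correct number of factors and the degenerate $0$ and $1$ factors fall in the right places; once the complementary-principal-vector step is arranged to absorb these, every item is routine.
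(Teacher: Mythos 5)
Your proof is correct in substance, but note that it does not follow the paper's route, because the paper gives no internal proof of this proposition at all: the appendix preamble defers every item either to the definition or to \autoref{pr:projection Grassmann angle} combined with Grassmann-angle identities proven in the external references \cite{Mandolesi_Grassmann,Mandolesi_Products} (for instance, i)--iii) are translations of angle symmetries such as $\Theta_{V,W^\perp}=\Theta_{W,V^\perp}$). Your argument instead derives everything inside this paper's own toolkit from a single lemma: the complementary decomposition $e_i=\sigma_i f_i+\sin\theta_i\, g_i$, which exhibits a principal basis of $(V,W)$ as simultaneously a principal basis of $(V,W^\perp)$ with singular values $\sin\theta_i$. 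Fed into \autoref{pr:projection factors} this gives iii); the symmetry of singular values under interchanging $V$ and $W$ gives i) and then ii) by substituting $V^\perp$ for $V$; the same substitution inside the wedge, together with orthonormality of the $g_i$'s and $f_j$'s, gives v); the Gram-determinant/Schur-complement identity gives vi); and the convexity induction, combined with \autoref{pr:properties pi}\,ii), gives iv). What your approach buys is self-containedness (no external citation needed, and the intermediate fact that principal bases for $W$ are principal bases for $W^\perp$ is of independent interest); what the paper's approach buys is brevity and uniformity with the rest of the appendix.

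One step should be made explicit rather than parenthetical, in iv). With the paper's definition there are only $m=\min\{p,q\}$ principal projection factors, and when $p>q$ the identity $\pi^2_{V,W}=\prod_{i\le m}\cos^2\theta_i$ that you start from is false, not just unproven: for $W\subsetneq V$ with $q=1<p$ the left side is $0$ while the right side is $1$, and then the unpadded sum $\prod_{i\le m} a_i+\prod_{i\le m}(1-a_i)$ equals $1$ even though $\zeta(V,W)=0$. Your convexity argument, and the identification ``single factor $\Leftrightarrow\ \dim V=1$'', work only after both products are padded to $p$ factors by appending angles $\frac{\pi}{2}$, i.e.\ by listing all $p$ singular values of $P:V\to W$, including the zeros forced by $\operatorname{rank} P\le q$. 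This padding is harmless elsewhere (it appends factors $\sin\frac{\pi}{2}=1$ to the product in iii), leaving it unchanged, and appends zeros to the cosine product, matching $\pi_{V,W}=0$), and your remark about ``a dimension count forcing a vanishing factor'' gestures at exactly this; but since this is the one regime where the unpadded formula yields a wrong value, it deserves a sentence of its own rather than a parenthesis.
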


\providecommand{\bysame}{\leavevmode\hbox to3em{\hrulefill}\thinspace}
\providecommand{\MR}{\relax\ifhmode\unskip\space\fi MR }
\providecommand{\MRhref}[2]{%
  \href{http://www.ams.org/mathscinet-getitem?mr=#1}{#2}
}
\providecommand{\href}[2]{#2}

\end{document}